\def\H{{\cal H}}
\def\P{{\cal P}}
\def\F{{\cal F}}
\newcommand{\N}{{\mathbb N}}
\newcommand{\WP}{\textsf{WP}}
\newcommand{\GWP}{\textsf{GWP}}
\newcommand{\eda}{\textsf{eda}\xspace}
\newcommand{\edas}{\textsf{eda}s\xspace}
\newcommand{\GIB}{\mathrm{GIB}}
\newcommand{\Cay}{\Gamma}
\newcommand{\HatCay}{\widehat{\Cay}}
\newcommand{\Sch}{\Sigma}
\newcommand{\IdC}{1_{\Cay}}
\newcommand{\IdHatCay}{1_{\HatCay}}
\newcommand{\IdS}{1_{\Sch}}
\newcommand{\start}{{}S}
\newcommand{\nullstring}{\varepsilon}
\newcommand{\fsa}{{\sf fsa}\xspace}
\newcommand{\pda}{{\sf pda}\xspace}
\newcommand{\gsm}{{\sf gsm}\xspace}
\newtheorem{theorem}{Theorem}[section]
\newtheorem{property}{Property}[section]
\newtheorem{proposition}[theorem]{Proposition}
\newenvironment{mylist}{\begin{list}{}{
\setlength{\parskip}{0mm}
\setlength{\topsep}{2mm}
\setlength{\parsep}{0mm}
\setlength{\itemsep}{0.5mm}
\setlength{\labelwidth}{7mm}
\setlength{\labelsep}{3mm}
\setlength{\itemindent}{0mm}
\setlength{\leftmargin}{12mm}
\setlength{\listparindent}{6mm}
}}{\end{list}}
\title{The generalised word problem in hyperbolic and relatively
hyperbolic groups}
\author{Laura Ciobanu, Derek Holt, and Sarah Rees}
\begin{document}

\maketitle
\begin{abstract}
We prove that, for a finitely generated group hyperbolic relative to virtually
abelian subgroups, the generalised word problem for a parabolic subgroup is
the language of a real-time Turing machine.  Then, for a hyperbolic group, we
show that the generalised word problem for a quasiconvex subgroup is a real-time
language under either of two additional hypotheses on the subgroup. 
 
By extending the Muller-Schupp theorem we show that the generalised word
problem for a finitely generated subgroup of a finitely generated virtually
free group is context-free. Conversely, we prove that a hyperbolic group must
be virtually free if it has  a torsion-free quasiconvex subgroup
of infinite index with context-free generalised word problem.
\end{abstract}

\noindent 2010 Mathematics Subject Classification: 20F10, 20F67, 68Q45\\
\noindent Key words: generalised word problem, relatively hyperbolic group, context-free language, real-time Turing machine

\section{Introduction}
\label{sec:intro}
Let $G=\langle X \rangle$ with $|X|<\infty$ be a group and $H\leq G$.
The {\em word problem} $\WP(G,X)$ and {\em generalised word problem}
$\GWP(G,H,X)$ are defined to be the preimages $\phi^{-1}(\{1_G\})$ and
$\phi^{-1}(H)$ respectively, where $\phi$ is the natural map from the set of
words over $X$ to $G$.
We are interested in the relationship between the algebraic properties
of $G$ (and $H$) and the formal language classes containing $\WP(G,X)$ and
$\GWP(G,H,X)$. These questions have already been well studied for the word
problem, but relatively little for the generalised word problem.  Since, as is
well known for $\WP(G,X)$, the question of membership of
$\WP(G,X)$ and $\GWP(G,H,X)$ in a formal language family $\F$ is typically
independent of the choice of
the finite generating set $X$,
we shall usually use the simpler notations $\WP(G)$ and $\GWP(G,H)$.  It will
be convenient to assume throughout the paper that all generating sets $X$ of
groups $G$ are closed under inversion; i.e. $x \in X \Rightarrow x^{-1} \in X$.

It is elementary to prove that $\WP(G)$ is regular if and only if $G$ is
finite and, more generally, that $\GWP(G,H)$ is regular if and only if $|G:H|$
is finite. It is well-known that $\WP(G)$ is context-free if and only if $G$
is virtually free \cite{MullerSchupp83}, and it is shown in
\cite{Holt,HoltRees} that $\WP(G)$ is a {\em real-time} language
(that is, the language of a real-time Turing machine) for several
interesting classes of groups, including hyperbolic and geometrically finite
hyperbolic groups. The solvability of $\GWP(G,H)$ has been established for numerous classes of groups, most recently for all (compact and connected) $3$-manifold groups \cite{FriedlWilton}.

In this paper, we study the conditions under which $\GWP(G,H)$ is a real-time or
a context-free language for subgroups $H$ of hyperbolic and relatively
hyperbolic groups. There are several definitions of relatively hyperbolic groups, and the one
that we are using here is that of \cite{Osin06}; so in particular the
Bounded Coset Penetration Property holds. Our first result is the following.

\begin{theorem}
\label{thm:relhyp}
Suppose that the finitely generated group $G$ is hyperbolic relative to a set
$\{H_i: i \in I\}$ of virtually abelian (parabolic) subgroups of $G$,
and that $H$ is a selected parabolic subgroup.
Then $\GWP(G,H)$ is a real-time language.
\end{theorem}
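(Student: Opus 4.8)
The plan is to recognise $\GWP(G,H)$ by maintaining, as the input word $w=x_1\cdots x_n$ is read one letter at a time, a canonical \emph{relative geodesic} normal form $\nu$ for the element represented by the prefix read so far. I would work in the relative Cayley graph $\HatCay=\Cay(G,X\cup\bigcup_i(H_i\setminus\{1\}))$, which by hypothesis is hyperbolic and satisfies the Bounded Coset Penetration (BCP) property, and store $\nu$ as an alternating sequence of \emph{syllables}: blocks of letters from $X$ separated by single $\HatCay$-edges (``jumps'') that each record a penetration of a parabolic coset $gH_i$, tagged with the coset type $i$ and the penetrating element of $H_i$. Because $H_i$ is virtually abelian, this penetrating element has a normal form lying in a virtually free-abelian structure and can be encoded by a bounded tuple of $\Z$-valued counters. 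Acceptance at the end of the input is then the bounded test that $\nu$ has relative length at most $1$ and that the (boundedly described) element it represents lies in $H=H_{i_0}$; writing $\hat d$ for the relative distance, this captures $\phi(w)\in H$ exactly, since $\phi(w)\in H$ forces $\hat d(1,\phi(w))\le 1$ and hence a bounded $\nu$, whereas $\hat d(1,\phi(w))\ge 2$ forces $\phi(w)\notin\bigcup_i H_i\supseteq H$ and so rejection.

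The engine of the construction is a \emph{bounded-update} property: there should be a constant $L$, depending only on $G$, $X$ and the $H_i$, such that appending a single generator $x\in X$ alters the canonical normal form only within its last $L$ syllables, changing in addition the penetrating element of at most one surviving parabolic syllable by a bounded amount. Granting this, the real-time machine does only $O(1)$ work per input letter: it keeps the active (most recent) end of $\nu$ beneath its heads, rewrites a bounded suffix, and updates the affected parabolic counter by $\pm O(1)$. The $\Z$-valued parabolic counters can be maintained under such bounded per-step increments in real time by the standard redundant-counter representation, and the polynomial growth of the virtually abelian parabolics guarantees that each jump, and each local recombination of syllables, is computable from bounded data.

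I would derive the bounded-update property from two ingredients. First, hyperbolicity of $\HatCay$ gives that relative geodesics to the $\hat d$-close points $g$ and $gx$ fellow-travel, so that after fixing a canonical choice (for instance relative shortlex) their normal forms can be brought into agreement by edits confined to a bounded-length suffix; this is the relative analogue of the mechanism underlying the real-time word problem for ordinary hyperbolic groups. Second, BCP promotes this suffix-stability in $\HatCay$ to stability of the parabolic penetration data: the set of cosets penetrated deeply by $\nu$ is unchanged outside a bounded suffix, so no parabolic syllable in the stable prefix can be created, destroyed, re-typed, or have its entry and exit points moved by a one-letter multiplication. I expect the main obstacle to be precisely this second step --- controlling how a single multiplication can reorganise the parabolic phases near the active end (merging two phases, splitting one, or collapsing a phase whose accumulated element becomes trivial) and proving that every such reorganisation is confined to the last $L$ syllables. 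This is exactly where BCP, together with the bounded-per-step growth of the virtually abelian parabolics, does the essential work; once the reorganisation is shown to be local, assembling the real-time Turing machine and verifying the acceptance condition is routine.
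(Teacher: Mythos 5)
Your proposal hinges on a \emph{bounded-update property} for a canonical relative geodesic normal form: that appending one generator changes the stored form only in its last $L$ syllables and perturbs at most one parabolic counter by $O(1)$. You rightly flag this as the crux, but it is a genuine gap, and in fact the difficulty begins one step earlier than where you locate it. Hyperbolicity of $\HatCay$ gives that relative geodesics from $1$ to $g$ and from $1$ to $gx$ \emph{fellow-travel}; it does not give that canonical (e.g.\ shortlex-type) normal forms of $g$ and $gx$ agree except on a bounded suffix, since a single multiplication can change which of many fellow-travelling geodesics is canonical far from the endpoint. This suffix-stability is not the mechanism behind the real-time word problem for ordinary hyperbolic groups: there one maintains not a geodesic normal form but a word reduced by a length-reducing rewriting system, and proves only that the reduced word's \emph{length} is linearly bounded by the geodesic length, with real-time implementability supplied by \cite[Theorem 4.1]{HoltRees}. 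Moreover, even granting path-level suffix stability, Property~\ref{prop:BCP}(3) only makes corresponding deep components of the two relative geodesics $\epsilon$-similar; it permits \emph{every} deep parabolic syllable's penetrating element to shift by a bounded $X$-amount, not just one, so the per-letter update is not $O(1)$. Finally, each syllable carries an unbounded element of a virtually abelian $H_i$, so rewriting ``the last $L$ syllables'' (merging, splitting, or collapsing phases) means manipulating unboundedly long counter representations, which a real-time machine cannot do in bounded time per input letter.

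This last obstruction is precisely what the paper's machinery is built to avoid. The actual proof maintains no normal form: it takes the $\P(D,E)$ extended Dehn algorithm of Goodman--Shapiro (Proposition~\ref{prop:P(D,E)}), whose alphabets $Z_i$ encode elements of the virtually abelian parabolics with exponential compression (\cite[Lemma 23]{GoodmanShapiro}), adds the anchored rules $Hz\to H$, and then proves the single inequality $|w_1|\le k\min\{|g|_X: g\in Hw\}$ for the reduced word, via compression to $\widehat{X}$-words, Properties~\ref{prop:AC1} and~\ref{prop:AC2}, and BCP; Proposition~\ref{prop:realtime} then gives both correctness and the real-time implementation. To salvage your route you would need to prove suffix-stability of a canonical relative normal form \emph{together with} an $O(1)$-accessible encoding of the parabolic data --- a substantially stronger statement than anything the paper uses --- or else replace the normal-form bookkeeping by a length-reducing rewriting argument of the above kind.
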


The proof uses a combination of results for relatively hyperbolic groups
that were developed by Antol\'in and Ciobanu in \cite{AntolinCiobanu} and the
{\em extended Dehn algorithms} that were introduced by Goodman and
Shapiro in \cite{GoodmanShapiro}.  By choosing $H=H_0$ to be the trivial
subgroup of $G$, we obtain a generalisation of the result proved 
(also using extended Dehn algorithms) in \cite{HoltRees} 
that $\WP(G)$ is a real-time language when $G$ is a geometrically finite
hyperbolic group. 

Since Goodman and Shapiro's techniques, such as the $N$-tight Cannon's algorithm
and related results (Theorem 37 in \cite{GoodmanShapiro}), only apply to virtually abelian groups and not, for example, to all virtually nilpotent groups,
our proof cannot be extended to parabolics beyond those that are virtually abelian.
Furthermore, even with a different approach to the proof there must be some
limitations on the choice of parabolics, since examples exist
 of relatively hyperbolic groups with
generalised word problems that are not real-time.
If one considers, for example, the free product $G=H \ast K$ of two groups $H$
and $K$, where $H$ is hyperbolic and $K$ has unsolvable word problem,
then $G$ is hyperbolic relative to $K$, but it is easy to see that the subgroup membership problem for $K$ in $G$ is unsolvable, so it cannot be real-time.

Recall that a subgroup $H \le G$ is called {\em  quasiconvex} in $G$ if
geodesic words over $X$ that represent elements of $H$
lie within a bounded distance of $H$ in the Cayley graph $\Cay(G,X)$.
And $H$ is called {\em almost malnormal} in $G$ if
$|H \cap H^g|$ is finite for all $g \in G \setminus H$.
We conjecture that, for a hyperbolic group $G$, the set $\GWP(G,H)$ is a real-time
language for any quasiconvex subgroup $H$ of $G$, but we are currently only
able to prove this under either of two 
additional hypotheses:

\begin{theorem}
\label{thm:qcsubhyp}
Let $G$ be a hyperbolic group, and
$H$ a quasiconvex subgroup of $G$. Suppose that either
\begin{mylist}
\item[(i)] $H$ is almost malnormal in $G$; or\\
\item[(ii)] $|C_G(h):C_H(h)|$ is finite for all $1 \ne h \in H$.
\end{mylist}
Then $\GWP(G,H)$ is a real-time language.
\end{theorem}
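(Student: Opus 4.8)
The plan is to construct, for the fixed quasiconvex subgroup $H \le G$, a real-time Turing machine that reads the input $w = x_1\cdots x_n$ one letter at a time while maintaining a bounded-size description of the right coset $H\phi(x_1\cdots x_i)$, accepting precisely when the final coset is $H$ itself, i.e.\ when $\phi(w)\in H$. Two easy reductions come first. If $H$ is finite then $\{w:\phi(w)=h\}$ is real-time for each fixed $h$ (run the real-time word-problem machine for the hyperbolic group $G$, supplied by the extended Dehn algorithm of \cite{HoltRees}, on $w$ followed by a fixed word for $h^{-1}$), and a finite union of real-time languages is real-time; so we may assume $H$ is infinite. Since $H$ is quasiconvex in the hyperbolic group $G$, the language of shortlex-least right-coset representatives is regular and the pair $(G,H)$ carries a coset-automatic structure, so for each generator $x$ the representatives $r(Hg)$ and $r(Hgx)$ synchronously fellow-travel. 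A structural tension already visible here is that the \emph{left} cosets $gH$ are isometric translates of $H$, hence uniformly quasiconvex, but they transform correctly only under prepending letters; a left-to-right real-time machine is forced to append letters and therefore to track the \emph{right} cosets $Hg$, and real-time languages are not closed under reversal, so the isometric picture is unavailable to us.

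The machine stores $r_i := r\bigl(H\phi(x_1\cdots x_i)\bigr)$ on a work tape, together with a parallel record of the states of the finite coset-recognising automaton along $r_i$, and at the end accepts iff $r_n = \nullstring$. The heart of the construction is the update step: on reading $x_{i+1}$ it must convert the data for $Hg_i$ into that for $Hg_i x_{i+1}$ with only $O(1)$ tape operations. Following the bookkeeping by which hyperbolic groups are shown to have real-time word problem, I would arrange that $r_{i+1}$ differs from $r_i$ only inside a bounded \emph{active window} at the moving end of the representative: there the machine appends $x_{i+1}$, performs the resulting cascade of Dehn-type reductions and coset-normalising moves in the style of \cite{GoodmanShapiro}, and recomputes the automaton states from the (stored) state just before the window. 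If the window length is bounded by a constant depending only on $G$, $H$ and $X$, the machine runs in real time.

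The main obstacle — and the reason the two hypotheses are imposed — is exactly the boundedness of this window. Because $Hg = g\,H^g$ is a translate of the conjugate $H^g$, whose quasiconvexity constant is \emph{not} uniform in $g$ (already for $H=\langle a\rangle\le F_2$ one has $H^{b^n}=\langle b^{-n}ab^n\rangle$ with constant growing in $n$), the nearest-point data determining $r_i$ can in principle shift far from the moving end. What controls the shift is the length for which two \emph{distinct} cosets can fellow-travel a common geodesic; I would isolate a uniform bound $L$ on this ``coincidence length'' as the single geometric property $(\ast)$ that makes the update local, and then prove the theorem by establishing $(\ast)$ under each hypothesis. The delicate technical point, which I expect to absorb most of the work, is to make ``coincidence length'' precise enough that bounding it genuinely confines the change in the coset representative to a window of size $O(L)$, matching the constants to the reduction machinery.

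It then remains to derive $(\ast)$. Under hypothesis (i), $H$ is quasiconvex and almost malnormal, so by a theorem of Bowditch $G$ is hyperbolic relative to $H$; the Bounded Coset Penetration property (in the sense of \cite{Osin06}) controls how geodesics interact with the cosets of $H$ and forbids long fellow-travelling of distinct cosets, giving $(\ast)$. Equivalently, almost malnormality forces $H$ to have finite width, so only boundedly many cosets can coincide near any point. Under hypothesis (ii), long fellow-travelling of $H$ with a distinct coset $Hg$ would produce an infinite-order $h\in H$ whose axis is shared by an element of $C_G(h)$ lying outside $H$, forcing $|C_G(h):C_H(h)|$ to be infinite and contradicting the hypothesis; this again yields a uniform $L$ and hence $(\ast)$. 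Verifying these two implications with explicit constants is the crux, and with $(\ast)$ in hand under either hypothesis the real-time machine described above decides $\GWP(G,H)$, completing the proof.
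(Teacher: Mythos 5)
Your high-level architecture (an online length-reducing procedure whose correctness rests on relative hyperbolicity plus BCP in case (i) and on the centraliser condition via the Schreier graph in case (ii)) points in the right direction, but the central mechanism has a genuine gap. You propose to store the shortlex-least right-coset representative $r_i = r(H\phi(x_1\cdots x_i))$ and to update it inside a bounded active window at its end. Coset-automaticity gives only that $r_i$ and $r_{i+1}$ \emph{fellow-travel}; it does not give that they agree outside a bounded suffix, and in general they do not (two sides of a long thin quadrilateral fellow-travel while differing everywhere). This is exactly why the real-time word problem for hyperbolic groups is not solved by maintaining canonical normal forms: the paper instead stores a non-canonical word produced by an extended Dehn algorithm, proves that this reduced word has length at most $k\cdot\min\{|g|_X : g\in Hw\}$, and then invokes \cite[Theorem 4.1]{HoltRees} (Proposition~\ref{prop:realtime} here) to convert that linear bound into a real-time machine. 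Note also that even with a bounded window the per-letter work is not $O(1)$, since one appended letter can trigger an unbounded cascade of length reductions; handling that cascade in real time is precisely the content of the Holt--Rees theorem, which your argument needs to cite at this point rather than only for the finite-$H$ reduction.

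The second gap is in your property $(\ast)$. As a single uniform bound $L$ on how long distinct cosets can ``coincide'' it is not what hypothesis (ii) delivers: Foord's characterisation (Proposition~\ref{prop:foord}) is scale-dependent --- for each $k$ there is a radius $K(k)$ outside of which $k$-balls of $\Sch(G,H,X)$ are $X$-graph isomorphic to $k$-balls of $\Cay(G,X)$, while inside $B_{K(k)}(\IdS)$ the Schreier graph may fold in ways that grow with $k$. Moreover the way this enters the proof is not by localising updates to a window, but by showing that an \eda-reduced word labels a $k$-local geodesic of $\Sch$ outside a fixed ball, whence (using that $\Sch$ is itself $\delta$-hyperbolic, by Foord or Kapovich, together with the local-to-global property of \cite[Proposition 2.1]{Holt}) its endpoint is at distance at least $|w|/2$ from $\IdS$ --- which is the linear bound Proposition~\ref{prop:realtime} requires. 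Similarly, in case (i) the step from ``BCP forbids long fellow-travelling'' to the linear bound is where essentially all of the work lies: one must compress the reduced word over $\widehat{X}$, verify via the Antol\'{\i}n--Ciobanu criteria that the result is a quasigeodesic without vertex-backtracking, and then compare it component-by-component with a geodesic coset representative using BCP. Your sketch names the right ingredients but does not supply the quantitative statement that makes the machine correct and real-time.
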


Our proofs of the result in the two cases are quite distinct, and so we write
them separately.  Under assumption (i), $G$ is
hyperbolic relative to $\{H\}$ \cite[Section 1, Example (III)]{Osin06}, and
the proof is similar to that of Theorem \ref{thm:relhyp} (although $H$ is not
usually virtually abelian, so we cannot apply that result directly).  Under
assumption (ii), we make use of some results of Foord \cite{Foord} about the
Schreier graph of $G$ with respect to $H$. Note that, since
centralisers of elements of infinite order in hyperbolic groups are
virtually cyclic, $|C_G(h):C_H(h)|$ is always finite for such elements
$h$ and so, in particular, assumption (ii) holds whenever $G$ is torsion-free.

Our next result, which concerns context-free generalised word problems, is
straightforward to prove and may be known already, but it does not appear
to be in the literature.

\begin{theorem}
\label{thm:subvf}
Let $G$ be finitely generated and virtually free, and let $H$ be a finitely
generated subgroup of $G$. Then $\GWP(G,H)$ is deterministic context-free.
\end{theorem}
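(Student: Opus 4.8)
The plan is to construct a single deterministic pushdown automaton (DPDA) recognising $\GWP(G,H)$, treating first the case where $G=F$ is free and then reducing the virtually free case to it. In the free case, let $F$ be free on a finite basis and $H_0\le F$ finitely generated. I would begin from the Stallings (folded core) graph $\mathcal{A}$ of $H_0$: a finite graph with basepoint $p$, whose edges are labelled by the basis, which read as a partial automaton is both deterministic and co-deterministic (folded), and for which a freely reduced word $u$ represents an element of $H_0$ exactly when it labels a closed path at $p$. The DPDA reads the input word left to right, keeping on its stack the freely reduced form of the prefix read so far and in its finite control the vertex of $\mathcal{A}$ reached by that reduced prefix (or a symbol $\bot$ recording that the reduced prefix has already left $\mathcal{A}$). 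On reading a letter $x$ it either pops, when $x$ cancels the top stack letter, or pushes $x$ and moves the control vertex along the edge of $\mathcal{A}$ labelled $x$ (passing to $\bot$ if no such edge exists), accepting exactly when the control vertex equals $p$ at the end of the input. There is no choice at any step.

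The subtle point, and the one I expect to require the most care, is that after a cancellation the reduced word may re-enter $\mathcal{A}$ even though an earlier prefix had left it, so the control must recover the correct vertex on each pop. Here I would use that $\mathcal{A}$ is folded: from the current vertex and the label of the edge just traversed the predecessor vertex is uniquely determined, so transitions can be run backwards; to handle re-entry out of the $\bot$ region it suffices to store, alongside each pushed letter, the control symbol that held before it was pushed and to restore it when the letter is popped. Since each stack symbol carries only this bounded amount of state and each input letter triggers a bounded number of stack operations, the result is a genuine DPDA.

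For the reduction, let $G$ be virtually free and choose a free normal subgroup $F\trianglelefteq G$ of finite index $m$; then $H_0:=H\cap F$ has index at most $m$ in $H$ and so is finitely generated, and $H$ is a disjoint union of finitely many cosets of $H_0$. Using a Reidemeister--Schreier transversal $t_1=1,\dots,t_m$ for $G/F$, I would convert each generator $x\in X$ together with a coset index $i$ into the Schreier element $t_i x t_{i\cdot x}^{-1}\in F$ (a fixed reduced word) and the updated index $i\cdot x$ given by the permutation action of $G$ on $G/F$. The DPDA then tracks the coset index in its finite control and, on each input letter, appends the corresponding bounded-length Schreier word to the reduced free part on the stack, reducing as before. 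After reading $w$ the stack holds the reduced free part $u$ and the control holds the coset index $i$; since $H\cap Ft_i$ is either empty or a single coset $H_0 f_i$, membership $w\in H$ amounts to $i$ being one of the finitely many indices met by $H$ together with $u$ lying in the fixed coset $H_0 f_i$ of $H_0$ in $F$. This last is again a folded-graph condition, so I would enlarge $\mathcal{A}$ by the finitely many target vertices corresponding to the cosets $H_0 f_i$ and let acceptance depend on the pair (coset index, control vertex).

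The only place determinism might be lost is the apparent union over the cosets $H_0 h_j$; since deterministic context-free languages are not closed under union, one cannot simply combine separate automata. This is avoided because the coset index $i$ is computed deterministically in the finite control from the input alone, so the case split is a deterministic routing rather than a union, and a single acceptance predicate on the pair (coset index, control vertex) suffices. Hence $\GWP(G,H)$ is recognised by one DPDA and is deterministic context-free.
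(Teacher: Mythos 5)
Your proposal is correct and follows essentially the same route as the paper: Schreier rewriting relative to a finite-index free subgroup reduces the problem to (coset) membership in the finitely generated subgroup $H\cap F$ of a free group, which is then decided by a deterministic pushdown automaton that performs free reduction on the stack while a finite folded coset automaton (your Stallings core graph; the paper's quasiconvexity-based coset automaton --- the same object) runs in the finite control, with re-entry after cancellation handled by recording the prior control state on the stack. The only cosmetic differences are that the paper chooses the transversal of $F$ in $G$ to extend a transversal of $H\cap F$ in $H$, so that acceptance reduces to membership in $K=H\cap F$ itself rather than in a coset $H_0f_i$, and it does not require $F$ to be normal.
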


The conclusion of Theorem~\ref{thm:subvf} may or may not hold if we drop the
condition that $H$ is finitely generated. Suppose that $G$ is the free group
on two generators $a,b$. For $H_1=[G,G]$, the set
$\GWP(G,H_1)$ consists of all words whose exponent sums in both $a$ and $b$
are zero, and is not context-free, but for the subgroup $H_2$ of words whose
exponent sum in $a$ is zero, the set $\GWP(G,H_2)$ is context-free.

We would like to know to what extent Theorem~\ref{thm:subvf} is best possible
when $H$ is finitely generated.
We observe that, where $H_G := \cap_{g \in G} H^g$ is the {\em core} of $H$ in
$G$, the set $\GWP(G,H,X)$ is the same set of words as $\GWP(G/H_G,H/H_G,X)$. 
We know of no examples for which $H$ is finitely generated with trivial core
and $\GWP(G,H)$ is context-free, but $G$ is not virtually free.
The following result is an attempt at a converse to Theorem~\ref{thm:subvf}.

\begin{theorem}
\label{thm:subvf_conv}
Let $G$ be a hyperbolic group, and let $H$ be a quasiconvex subgroup of
infinite index in $G$ such that $|C_G(h):C_H(h)|$ is finite for all
$1 \ne h \in H$. If $\GWP(G,H)$ is context-free then $G$ is virtually free.
\end{theorem}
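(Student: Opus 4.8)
The plan is to argue the contrapositive via the Muller--Schupp theorem, passing from $\GWP(G,H)$ to the geometry of the \emph{Schreier graph} $\Sch = \Sch(G,H,X) = H\backslash\Cay(G,X)$, whose vertices are the cosets $Hg$ and whose base vertex $v_0$ is $H$ itself. Reading a word $w$ over $X$ traces a path in $\Sch$ from $v_0$, and $w \in \GWP(G,H)$ precisely when this path returns to $v_0$; thus $\GWP(G,H)$ is exactly the language of loops at $v_0$, the direct analogue of the r\^ole played by $\WP(G)$ in $\Cay(G,X)$. Since $X$ is finite, $\Sch$ has bounded valence and finitely many edge labels, so by the graph-theoretic form of the Muller--Schupp theorem, if this loop language is context-free then $\Sch$ is a \emph{context-free graph}: it has only finitely many isomorphism types of end-component, and hence is quasi-isometric to a tree. (Here $H$ is finitely generated, since quasiconvex subgroups of hyperbolic groups are, so $\Sch$ is a genuine, well-behaved quotient.)

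It therefore suffices to show that, under the stated hypotheses, $\Sch$ being quasi-isometric to a tree forces $G$ to be virtually free; equivalently, I would prove the contrapositive: if $G$ is hyperbolic but \emph{not} virtually free, then $\Sch$ is not quasi-isometric to a tree. When $G$ is not virtually free, its boundary $\partial G$ is non-empty and not totally disconnected, so it contains a non-degenerate connected subset. Because $H$ is quasiconvex it is quasi-isometrically embedded and has a well-defined limit set $\Lambda H \subseteq \partial G$, and because $|G:H|=\infty$ this limit set is a proper closed subset, $\Lambda H \ne \partial G$. The ends of the quotient $\Sch = H\backslash\Cay$ are governed by the $H$-action on $\partial G$, and I would argue that a non-degenerate connected component of $\partial G$ lying outside $\Lambda H$ survives the quotient as a connected piece of the space of ends of $\Sch$. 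Since a graph quasi-isometric to a tree has totally disconnected space of ends, the persistence of this connected piece contradicts tree-likeness, completing the contrapositive and hence the proof.

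The hypotheses enter exactly at this geometric step. Quasiconvexity is what gives $H$ a proper limit set and makes the collapsing map $\Cay \to \Sch$ coarsely controlled, so that the large-scale geometry transverse to $\Lambda H$ is preserved rather than destroyed; infinite index guarantees $\Lambda H \ne \partial G$, so that some connected boundary data genuinely remains; and the centraliser condition $|C_G(h):C_H(h)|<\infty$ is needed to control the stabilisers arising in the quotient, preventing the $H$-action from introducing short loops that would artificially ``thin'' $\Sch$, and in particular ruling out the torsion phenomena that could otherwise let the Schreier graph degenerate to a tree even when $G$ is one-ended. I expect the main obstacle to be precisely this last implication --- making rigorous that the quotient of a non-virtually-free (hence, in its one-ended pieces, connected-at-infinity) hyperbolic Cayley graph by a quasiconvex subgroup of infinite index cannot be quasi-isometric to a tree. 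An alternative, more hands-on route to the same contradiction would bypass the boundary and instead exhibit inside $\Sch$ an explicit family of configurations violating the context-free pumping (Ogden/interchange) lemma, built from the two-dimensional relator geometry of a one-ended factor of $G$; I would fall back on this if the coarse boundary argument proves awkward to make uniform.
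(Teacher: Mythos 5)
Your starting point --- pass to the Schreier graph $\Sch = \Sch(G,H,X)$, on which $\GWP(G,H,X)$ is the loop language at $\IdS$, and run a Muller--Schupp-type argument there --- is also the paper's starting point, but both of the steps you treat as citable contain genuine gaps. There is no off-the-shelf ``graph-theoretic Muller--Schupp theorem'' saying that a rooted bounded-valence graph with context-free loop language is a context-free graph (hence a quasi-tree). Context-freeness of the loop language yields the $L$-bounded diagonal triangulation only for loops \emph{based at} $\IdS$; in the Cayley graph setting one upgrades this to all loops, and thence to finitely many end-cone types, by vertex-transitivity, which $\Sch$ lacks. A loop far from $\IdS$ can only be triangulated after conjugating it to $\IdS$ along a connecting path, and one must then control the chords landing on that path; moreover balls of $\Sch$ near $\IdS$ genuinely differ from balls of $\Cay$. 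The paper never proves $\Sch$ is a quasi-tree. Instead it assumes $G$ is one-ended, takes a simple closed path $w_1w_2w_3$ in $\Cay$ witnessing one-endedness, and uses Proposition~\ref{prop:foord} --- the centraliser condition is equivalent, for quasiconvex $H$, to $\Sch$ satisfying $\GIB(\infty)$, i.e.\ balls far from $\IdS$ are $X$-graph isomorphic to balls of $\Cay$ --- to plant that path at a distant vertex $p$ of $\Sch$, conjugate it back to $\IdS$ along a geodesic, triangulate the resulting based loop with the grammar, and reach a contradiction by the three-colouring argument. So the centraliser hypothesis is not about ``stabilisers introducing short loops''; it is precisely the mechanism for importing Cayley-graph geometry into $\Sch$ far from the base point, and your proposal has no concrete way to use it.

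Your second step, from ``$G$ not virtually free'' to ``$\Sch$ not quasi-isometric to a tree,'' also fails as described. The space of ends of \emph{any} graph is totally disconnected, so that criterion detects nothing; if you mean the Gromov boundary, total disconnectedness of the boundary does not characterise quasi-trees among bounded-valence hyperbolic graphs that are not quasi-homogeneous. More decisively, in the essential case where $G$ is one-ended, $\partial G$ is connected, so there is no ``non-degenerate connected component of $\partial G$ lying outside $\Lambda H$'' to push into the quotient; the relation between $\partial G$, the limit set of $H$, and the large-scale geometry of $\Sch$ is exactly what needs proving, not a remark. Finally, even granting that $G$ has more than one end, concluding virtual freeness requires Stallings' theorem, checking that the hypotheses descend to the vertex groups of the splitting, and Dunwoody accessibility; this reduction, which opens the paper's proof, is absent from your plan.
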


Note that we make the same assumption on centralisers of elements $h \in H$
as in Theorem \ref{thm:qcsubhyp} (ii), and again we conjecture that this
is not necessary for the conclusion of the theorem.
However the necessity of quasiconvexity is demonstrated by a
construction found in \cite{Rips}, as follows.
Let $Q$ be any finitely presented group, and
choose $\lambda >0$. We can define a finitely presented group $G$ and a normal
$2$-generated subgroup $H$ of $G$, such that $G/H \cong Q$, and $G$ satisfies
the small cancellation condition $C'(\lambda)$; in particular we can choose
$Q$ with insoluble word problem, and choose $\lambda \leq 1/6$ to ensure
that $G$ is hyperbolic, and in that case $\GWP(G,H)$ is not even recursive.

The closure properties of context-free languages
\cite[Chapter 11]{HopcroftUllman} and of real-time languages \cite{Rosenberg}
ensure that all of the above results are independent of the choice of the finite
generating set $X$ of $G$, so we are free to choose $X$ to suit our own
purposes in the proofs. (More generally, for membership of $\WP(G,X)$ or
$\GWP(G,H,X)$ in a formal language class $\mathcal{F}$ to be independent of
the choice of $X$, we need $\mathcal{F}$ to be closed under inverse
homomorphism. This property holds for all of the most familiar formal
language classes, including regular, context-free, deterministic
context-free, real-time, context-sensitive, deterministic context-sensitive,
and recursive languages.) 

This article is structured as follows.
In Section~\ref{sec:relhypgps}, we summarise the basic properties of relatively
hyperbolic groups that we shall need, and we 
recall some of their properties that are proved in \cite{AntolinCiobanu}.
In Section \ref{sec:eda}, we introduce the concept of
{\em extended Dehn algorithms} for solving the word and generalised
word problems in groups and we recall some results pertaining to relatively
hyperbolic groups that are proved in \cite{GoodmanShapiro}.
Sections~\ref{sec:main_thm}, \ref{sec:qcthmi}, \ref{sec:qcthmii},
\ref{sec:subvf} and \ref{sec:subvf_conv} contain the proofs of Theorems
\ref{thm:relhyp}, \ref{thm:qcsubhyp} (i), \ref{thm:qcsubhyp} (ii),
\ref{thm:subvf} and \ref{thm:subvf_conv}, respectively.
Finally, in Section \ref{sec:foord}, we sketch a proof of a result of
Foord \cite[Theorem 4.3.1.1]{Foord} that we shall need, since the original
source might not be readily available to readers.

\section{Relatively hyperbolic groups}
\label{sec:relhypgps}
We follow \cite{Osin06} for notation and the definition of relatively hyperbolic
groups. This definition is equivalent to what Farb calls
``strong relative hyperbolicity'' in \cite{Farb}.

Suppose that $G$ is a group, $X$ a finite generating set,
and $\{H _i : i \in I\}$ a
collection of subgroups of $G$, which we call {\em parabolic} subgroups. We
define $X_i := X \cap H_i$ to be the set of generators in $X$ that lie within
the subgroup $H_i$, and $X_I := \cup_{i \in I} X_i$.  Then we define $\H$ and
$\widehat{X}$ as the sets
\[ \H := \bigcup_{i \in I} (H_i \setminus \{1\}),\quad 
\widehat{X} = X \cup \H. \]

Much of our argument involves the comparison of lengths of various words
that represent an element $g \in G$ written over different sets, namely
$X$, $\widehat{X}$, a set $Z \supseteq X$ that is introduced during the extended
Dehn algorithm (which, when $Z \supset X$, is not actually a generating set
for $G$, since only some of the words over $Z$ correspond to elements of $G$),
and certain subsets of these sets.

So we shall consider the Cayley graphs $\Cay$ and $\HatCay$ for $G$
over the generating sets $X$ and $\widehat{X}$,
and view words over $X$ and $\widehat{X}$ also as paths in $\Cay$ and
$\HatCay$. We denote by $d_\Cay$ the graph distance in $\Cay$ and
by $d_{\HatCay}$ the graph distance in $\HatCay$. For a word
$w$ written over a set $Y$ (that is, an element of $Y^*$), we write $|w|$ to
denote the length of $w$
and, for a group element $g$, we write $|g|_Y$ to denote the length of a
shortest word over $Y$ that represents the group element $g$ (assuming that
such a word exists). We call $|g|_Y$ the {\em $Y$-length} of $g$, and a
shortest word over $Y$ that represents $g$ a {\em $Y$-geodesic} for $g$.

Most words will be written over either $X$ or $\widehat{X}$ and, in order to
make a clear distinction between those two types of words we will normally use
Roman letters as names
for words over $X$ and paths in $\Cay$, and Greek letters as names for words
over $\widehat{X}$ and paths in $\HatCay$, with the exception that we will
write $\widehat{w}$ for the word over $\widehat{X}$ that is derived from a word $w$
over $X$ by a process called {\em compression}, which will be described
later in this section.

We refer to \cite{Osin06} for a precise definition of relative hyperbolicity of
$G$ with respect to $\{H_i: i \in I\}$. Under that definition, relative
hyperbolicity is known to be equivalent to the fact that the Cayley graph
$\HatCay$ is $\delta$-hyperbolic
for some $\delta$ together with the Bounded Coset Penetration Property, stated
below as Property \ref{prop:BCP}.

From now on we shall assume that $G$ is relatively hyperbolic in this sense.
It is proved in \cite{Osin06} that, under our assumption that $G$ is finitely
generated, the subgroups $H_i$ are finitely generated, the set $I$ 
is finite, and any two distinct parabolic subgroups have finite intersection. 
We assume that the generating set $X$ is chosen such that
$H_i = \langle X_i \rangle$ for all $i \in I$.

We need some terminology relating to paths in $\HatCay$.
\begin{enumerate}
\item
We call a subpath of a path $\pi$ an {\em $H_i$-component}, or
simply a {\em component}, of $\pi$ if it is written as a word over $H_i$ for
some $i \in I$, and is not contained in any longer such subpath of $\pi$.
\item
Two components of (possibly distinct) paths
are said to be {\em connected} if both are
$H_i$-components for some $i\in I$, and both start within the same left coset
of $H_i$.  
\item
A path is said to {\em backtrack} if it has a pair of
connected components.
A path is said to {\em vertex-backtrack} if it has a subpath of length
greater than 1 that is labelled by a word representing an element of some $H_i$.

Note that if a path does not vertex-backtrack, then it does not backtrack and
all of its components are edges.
\item
For $\kappa \ge 0$ we say that two paths
are {\em $\kappa$-similar} if the $X$-distances between their two initial
vertices and between their two terminal vertices are both at most $\kappa$.
\end{enumerate}

\begin{property}
[Bounded Coset Penetration Property {\cite[Theorem 3.23]{Osin06}}]
\label{prop:BCP}
For any $\lambda \geq 1, c \geq 0, \kappa \geq 0$, there exists a constant
$\epsilon = \epsilon (\lambda, c, \kappa)$ such that, for any two
$\kappa$-similar $(\lambda, c)$-quasi-geodesic
paths $\pi$ and $\pi'$ in $\HatCay$ that do not backtrack,
the following conditions hold:
\begin{enumerate}
\item[{\rm(1)}] The sets of vertices of $\pi$ and $\pi'$ are contained in the
closed $\epsilon$-neighbourhoods (with respect to the metric $d_\Cay$) of
each other.
\item[{\rm(2)}] For any $H_i$-component $\sigma$ of $\pi$ for which the $X$-distance between its endpoints is greater than $ \epsilon$,
some $H_i$-component $\sigma'$ of $\pi'$ is connected to $\sigma$.
\item[{\rm(3)}] Whenever $\sigma$ and $\sigma'$ are connected $H_i$-components of $\pi$ and $\pi'$ respectively, 
the paths $\sigma$ and $\sigma'$ are $\epsilon$-similar.
\end{enumerate}
\end{property}

We shall need to use two more properties of relatively hyperbolic groups
that are proved in \cite{AntolinCiobanu}.

\begin{property}[{\cite[Theorem 5.2]{AntolinCiobanu}}] \label{prop:AC1}
Let $Y$ be a finite generating set for $G$. Then for some $\lambda \geq 1, c
\geq 0$ there exists a finite set $\Psi$ of non-geodesic words over
$Y\cup \H$ such that:
\begin{quote}
every 2-local geodesic word over $Y\cup \H$ not containing any element of
$\Psi$ as a subword labels a $(\lambda,c)$-quasi-geodesic path in
$\HatCay$ without vertex-backtracking. 
\end{quote}
\end{property}

Now suppose that $v$ is any word in $X^*$.
Then following \cite[Construction 4.1]{AntolinCiobanu} we define $\widehat{v}$
to be the word over $\widehat{X}$
that is obtained from $v$ by replacing (working from the left) each
subword $u$ that is maximal as a subword over some $X_i$ ($i \in I$) by
the element $h_u$ of $\H$ that the subword represents.
We call these $X_i$-subwords $u$ of $v$ its {\em parabolic segments},
and use the term {\em compression} for the process that converts $v$ to
$\widehat{v}$. A word $v$ is said to have {\em no parabolic shortenings} if each
of its parabolic segments is an $X_i$-geodesic.

In order to avoid confusion we comment that a parabolic segment (which is a
maximum 
subword over some $X_i$ of a word over $X$) is not quite the same as a component
(which is a maximal subpath/subword over some $H_i$ of a path/word over
$\widehat{X}$); but clearly the two concepts are close.

The other required property is proved in \cite[Lemma 5.3]{AntolinCiobanu};
the precise description of $\Phi$ is taken from the proof of that lemma, rather
than from its statement.

\begin{property}\label{prop:AC2}
Let $Y$ be a finite generating set for $G$. Then for some
$\lambda  \geq 1$,
$c\geq 0$, some finite subset 
$\H'$ of $\H$, and any finite generating set $X$ of $G$ 
with $$Y\cup \H'\subseteq X\subseteq Y\cup \H,$$ there is a finite subset
$\Phi$ of non-geodesic words over $X$ such that:
\begin{quote}
if a word $w \in X^*$ has no parabolic shortenings and no subwords in $\Phi$,
then the word $\widehat{w} \in \widehat{X}^*$ is a 2-local geodesic and labels a
$(\lambda, c)$-quasi-geodesic path in $\HatCay$ without vertex-backtracking.

Furthermore, for every $i\in I$ and $h\in H_i$, we have $|h|_{X}=|h|_{X_i}$.
\end{quote}
In fact $\Phi = \Phi_1 \cup \Phi_2$, where $\Phi_1$ is the set of non-geodesic
words in $X^*$ of length 2, and $\Phi_2$ is the set of all words $u \in X^*$
with no parabolic shortening and for which $\widehat{u} \in \Psi$,
where $\Psi$ (together with $\lambda$ and $c$) is given by
Property~\ref{prop:AC1}.
\end{property}

\section{Extended Dehn algorithms}
\label{sec:eda}
Our proofs of Theorem~\ref{thm:relhyp} and both parts of
Theorem~\ref{thm:qcsubhyp} depend on the construction of an
{\em extended Dehn algorithm} (\eda) \cite{GoodmanShapiro} for $G$ with
respect to $H$.  In each case, we then need to show that the \eda satisfies a
particular condition that allows us to apply Proposition~\ref{prop:realtime}
(below)
in order to verify both that the algorithm solves $\GWP(G,H)$ and that it
can be programmed on a real-time Turing machine.
Proposition~\ref{prop:realtime} is derived from \cite[Theorem 4.1]{HoltRees},
which was used to prove the solubility of the word problem in
real-time for various groups with \edas to solve that problem. We restate that 
result as a proposition in this paper for greater clarity of exposition.

Our definition of an extended Dehn algorithm (which is defined with respect to
a specific finite generating set $X$ of $G$) is modelled on the
definition of \cite{GoodmanShapiro} (where it is called a {\em Cannon's
algorithm}), with the difference that
we are using our algorithm to solve a generalised word problem $\GWP(G,H,X)$
rather than a word problem $\WP(G,X)$.  Elsewhere in the literature
\cite{HoltRees} the same concept is called a {\em generalised Dehn algorithm};
our decision to introduce a new name is based on both our recognition that
there are many various different algorithms attributed to (more than one)
Cannon, and our desire to avoid overuse of the term `generalised'.

For a (Noetherian) rewriting system $R$ with alphabet $Z$ and $w \in Z^*$, we
write $R(w)$ for the reduction of the word $w$ using the rules of $R$. In
general, $R(w)$ may depend on the order in which the rules are applied, and
we shall specify that order shortly. A word $w$ is called {\em ($R$-)reduced}
if $R(w)=w$; that is, if $w$ does not contain the left hand side
of any rule as a subword.

We define an \eda for a finitely generated group $G= \langle X \rangle$
with respect to a subgroup $H \le G$ to be
a finite rewriting system $S$ consisting of rules $u \rightarrow v$, where
\begin{mylist}
\item[(i)]  $u,v \in Z^*$ for some
finite alphabet $Z \supseteq X \cup \{H\}$;
\item[(ii)] $|u|>|v|$; and
\item[(iii)] either $u,v \in (Z \setminus \{H\})^*$
or $u=Hu_1,v=Hv_1$, with $u_1,v_1 \in (Z \setminus \{H\})^*$.
\end{mylist} 
We say that the \eda $S$ solves the generalised word problem $\GWP(G,H,X)$ if,
for every word $w$ over $X$, we have $S(Hw) = H$ if and only if $w$ represents
an element of $H$. If $H=\{1\}$ (in which case we may assume only that
$Z \supseteq X$), then we call $S$ an \eda for $G$.

As observed earlier, for $S(w)$ to be well-defined, we need to specify the
order in which the reduction rules are applied to words $w$.
In the terminology of \cite[Section 1.2]{GoodmanShapiro}, $S$ with the order
we specify below is an
{\it incremental rewriting algorithm} and, since we shall only apply
it to words of the form $Hw$ with $w \in X^*$, the rules $Hu_1 \to Hv_1$
are effectively {\em anchored} rules.
We assume that no two distinct rules have the same left hand sides.
Then we require that when a word $Hw$ contains several left hand sides of
$S$, the rule which is applied is one that ends closest to the start of $Hw$;
if there are several such rules, the one with the longest left hand side is
selected.

In our applications, the rules of the form $u \to v$ with
$u,v \in (Z \setminus \{H\})^*$ will form an \eda $R$ 
for $G$, of a type that is considered in
\cite{GoodmanShapiro}, and which solves the word problem $\WP(G,X)$;
i.e. $R(w)$ is the empty word if and only if $w=_G 1$.
The properties of the \eda $R$ that we shall use are
described in more detail in Proposition \ref{prop:P(D,E)} below.
Many of the technical results of \cite{GoodmanShapiro} apply without
modification to \edas that solve a $\GWP$ rather than $\WP$.

Note that the set $Z$ may properly contain $X \cup \{H\}$, and so contain
symbols that do not correspond to either elements or subsets of $G$.
But it is a consequence of \cite[Proposition 3]{GoodmanShapiro} that a word
in $(Z \setminus \{H\})^*$ that arises from applying these rules to a word
$w \in X^*$ unambiguously corresponds to the element of $G$ represented
by $w$, and so we may interpret such words as elements in $G$.
We shall see shortly that our rewrite rules $Hu \to Hv$ will all be of the
form $Hz \to H$ for words $z \in (Z \setminus \{H\})^*$ that represent
elements of $H$, and so words derived by applying rules of the \eda to $Hw$
with $w \in X^*$ unambiguously represent the coset of $H$ in $G$ defined by
$Hw$.

Following \cite[Corollary 27]{GoodmanShapiro}, for a positive integer $D$, we
say that an \eda $S$ that solves the word problem for a group
$K=\langle Y \rangle$ is {\em $D$-geodesic} if a word $w$ that is $S$-reduced
and represents an element $g$ of $Y$-length at most $D$ must
in fact be written over $Y$, and be a $Y$-geodesic for $g$.

In this section so far, we have not made any assumptions on $G,H,X$, beyond
the finiteness of the generating set $X$.
Suppose now that the parabolic subgroups $H_i$ are all virtually abelian,
and that $R$ is an \eda for $G$  that solves $\WP(G,X)$.
For integers $D \ge E \ge 0$, we say that $R$ satisfies $\P(D,E)$ if the
following conditions hold.
\begin{mylist}
\item[(1)] For each $i$, $X_i := X \cap H_i$ generates $H_i$, and
the alphabet $Z$ of $R$ has the form $Z = \cup_{i \in I} Z_i \cup X$
with $X_i \subseteq Z_i$.
\item[(2)] For each rule $u \to v$ of $R$, we either have $u,v \in X^*$,
or $u,v \in Z_i^*$ for a unique $i \in I$.
\item[(3)] For each $i \in I$, the rules $u \to v$ with $u,v \in Z_i^*$ form
a $D$-geodesic \eda $R_i$ that solves $\WP(H_i,X_i)$.
\item[(4)] All $R$-reduced words $w \in X^*$ that have length at most $E$
are $X$-geodesic.
\end{mylist}

The following result is proved under slightly more general conditions
on the parabolic subgroups in \cite{GoodmanShapiro}, and is stated here in the
form in which we need it:
\begin{proposition}[{\cite[Theorem 37]{GoodmanShapiro}}]
\label{prop:P(D,E)}
Suppose that $G = \langle Y \rangle$ is hyperbolic relative to the virtually
abelian parabolic subgroups $H_i$.
Then, there is a finite generating set $X$ of $G$, consisting of the
generators in $Y$ together with some additional elements from the $H_i$
(that include all non-trivial elements from the intersections $H_i \cap H_j$
with $i \ne j$) with the following property: for all sufficiently large
integers $D,E$ with $D \ge E \ge 0$, there is an $\eda$ for $G$ that solves
$\WP(G,X)$ and satisfies $\P(D,E)$.
\end{proposition}

We need to extend our definition of the property $\P(D,E)$ to our wider
definition of an \eda for a group with respect to a subgroup.
For $G$ satisfying the above hypotheses, and given any subgroup $H \le G$, we
shall say that an \eda $S$ for $G$ with respect to $H$, with alphabet
$Z \cup\{H\}$, satisfies $\P(D,E)$ if the rules in $S$ of the form $u \to v$
with $u,v \in Z^*$ form an \eda $R$ satisfying $\P(D,E)$.

In the proofs of each of Theorems
\ref{thm:relhyp}, \ref{thm:qcsubhyp} (i), \ref{thm:qcsubhyp} (ii),
we shall apply the following result, which is essentially (part of)
\cite[Theorem 4.1]{HoltRees}.
\begin{proposition}
\label{prop:realtime}
Let $G$ be a group, finitely generated over $X$, $H$ a subgroup of $G$, and 
let $S$ be an extended Dehn algorithm for $G$ with respect to $H$.
Suppose that there exists a constant $k$ such that, for any word $w$ over $X$,
we have
\[ |w_1| \leq k \min \{ |g|_X : g\in G,\, g \in  Hw \}, \] 
where $w_1$ is the word over $Z$ defined by $Hw_1=S(Hw)$.
Then $S$ solves the generalised word problem and can be programmed on
a real-time Turing machine.
\end{proposition}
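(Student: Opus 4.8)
The plan is to prove the two assertions separately: first that $S$ correctly decides $\GWP(G,H,X)$, and then that it can be implemented in real time. The correctness argument is short and relies only on the fact that every rule of $S$ preserves the coset $Hw$. Indeed, the word-problem rules $u \to v$ with $u,v \in (Z\setminus\{H\})^*$ satisfy $u =_G v$ (they form the \eda $R$ solving $\WP(G,X)$), while each anchored rule has the form $Hz \to H$ with $z$ representing an element of $H$, so replacing $Hz$ by $H$ leaves the coset unchanged. Hence, by the consequence of \cite[Proposition 3]{GoodmanShapiro} noted above, the word $w_1$ defined by $Hw_1 = S(Hw)$ represents an element of the coset $Hw$. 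If $w$ represents an element of $H$ then $1 \in Hw$, so $\min\{|g|_X : g \in Hw\} = 0$; the length hypothesis then forces $|w_1| = 0$, i.e. $w_1 = \nullstring$ and $S(Hw) = H$. Conversely, if $S(Hw) = H$ then $w_1 = \nullstring$ represents $1$, and since $w_1 \in Hw$ we get $Hw = H$, so $w \in H$. This establishes that $S$ solves the generalised word problem.

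For the real-time implementation I would build a machine with a single work tape used as a stack whose contents, read from the bottom, spell the current word $w_1$ over $Z$, with the head resting on the topmost symbol; the finite control additionally caches the top few symbols of the stack. The machine maintains the invariant that, after the prefix $w^{(n)}$ of the input has been read, the stack holds a word $w_1^{(n)}$ with $Hw_1^{(n)} = S(Hw^{(n)})$. On reading the next input letter $x \in X$ it pushes $x$ and then applies the rules of $S$ in the prescribed leftmost-ending, longest order until the stack word is again $S$-reduced. At the end of the input it accepts precisely when the stack is empty; by the previous paragraph this correctly decides membership of the input in $H$.

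The whole difficulty is to show that each input letter can be processed in a bounded number of steps, which is essentially the content of \cite[Theorem 4.1]{HoltRees}. The starting observation is local: since $w_1^{(n)}$ is $S$-reduced, any left-hand side occurring in $w_1^{(n)} x$ must involve the final letter $x$, so the first reduction takes place within the last $L$ symbols, where $L$ is the maximal length of a left-hand side of $S$. Each subsequent reduction can only create new left-hand sides that straddle the region just rewritten, so the active window never moves to the right of the top of the stack; the real question is whether it can cascade arbitrarily far to the \emph{left}. The key claim, which I expect to be the main obstacle, is that the length hypothesis bounds this cascade: there is a constant $N$, depending on $S$ and $k$, such that appending one letter to an $S$-reduced word and re-reducing alters only its last $N$ symbols, so that all of the rewriting can be carried out inside the cached window and the machine performs only $O(1)$ work per input symbol.

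Proving this claim is genuinely delicate because the alphabet $Z$ may contain compression symbols that each represent an element of large $X$-length, so a reduced word over $Z$ can be far shorter than a geodesic for the element it represents; in particular one cannot simply bound the cascade from below by comparing $|w_1^{(n)}|$ with the coset geodesic length $\ell_n := \min\{|g|_X : g \in Hw^{(n)}\}$. The argument instead has to combine the leftmost-ending reduction order with the bound $|w_1^{(n)}| \le k\,\ell_n$ and the Lipschitz estimate $|\ell_{n+1} - \ell_n| \le 1$ in order to rule out a long leftward collapse after a single appended letter: a cascade of length much greater than a constant would force the reduced words of two consecutive prefixes to have lengths incompatible with these two bounds. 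Once the bounded-window claim is in place, the real-time implementation, and hence the proposition, follow.
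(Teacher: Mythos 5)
Your correctness argument is fine and is essentially the paper's own (the paper's version is even terser): the rules of $S$ preserve the coset, so the displayed inequality forces $S(Hw)=H$ exactly when $1\in Hw$.

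The real-time half, however, rests on a claim that is false. You assert that there is a constant $N$ such that appending one letter to an $S$-reduced word and re-reducing alters only the last $N$ symbols, so that the machine does $O(1)$ stack work per input letter; and you propose to derive this from $|w_1^{(n)}|\le k\ell_n$ together with $|\ell_{n+1}-\ell_n|\le 1$ (where $\ell_n=\min\{|g|_X:g\in Hw^{(n)}\}$). Those two facts give only an \emph{upper} bound on $|w_1^{(n+1)}|$ and no lower bound, so they are entirely consistent with a single appended letter collapsing the stored word by an amount proportional to its length; nothing in the hypotheses rules out a drop of order $(k-1)\ell_n$. Worse, the claim itself cannot hold in the cases the proposition is applied to: take $H=\{1\}$ and let $S$ be the standard Dehn algorithm of a hyperbolic group $G$ that is not virtually free (a surface group, say), which satisfies the hypothesis with $k=2$ by \cite[Proposition 2.1]{Holt}. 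A machine of the kind you describe---a single stack in which each input letter triggers only boundedly many pops and pushes near the top---is a deterministic pushdown automaton in disguise, so its existence would make $\WP(G)$ context-free and hence, by \cite{MullerSchupp83}, $G$ virtually free, a contradiction. So unbounded leftward cascades genuinely occur, and coping with them is the entire content of the real-time result. The paper does not reprove this step: it simply invokes \cite[Theorem 4.1]{HoltRees}, whose proof abandons any local bound and instead exploits the fact that the \emph{total} number of rule applications over the whole input is linear (each application shortens the stored word), combining this with the displayed inequality in an amortised, multi-tape construction that guarantees the reduction is complete by the time the input ends. To make your proof work you would need an argument of that amortised kind (or an explicit appeal to \cite{HoltRees}); the bounded-window route is a dead end.
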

\begin{proof}
Since, for any $w \in \GWP(G,H)$ the minimal length representative of $Hw$
is the identity element, it is
immediate from the inequality that $S$ solves $\GWP(G,H)$.
That an \eda satisfying that condition can be programmed in real-time is 
then an immediate consequence of \cite[Theorem 4.1]{HoltRees}; in fact as stated
that theorem applies only to \edas to solve the word problem, but it is clear 
from the proof that it applies also to $\GWP(G,H)$.
\end{proof}


\section{The proof of Theorem \ref{thm:relhyp}}
\label{sec:main_thm}

Suppose that $G = \langle X \rangle$ satisfies the hypotheses of
Theorem \ref{thm:relhyp}, and that $H=H_0$ is the selected parabolic subgroup,
generated by $X_0 \subset X$.

We start with some adjustments to $X$ that are necessary to ensure that it
satisfies the conditions we need for our arguments. 
These adjustments all consist of appending generators that lie in one of
the parabolic subgroups.
Firstly, we extend $X$ to contain the finite subset $\H'$ of $\H$  defined
in Property \ref{prop:AC2}.
Secondly, we adjoin to $X$ the elements of the $H_i$ that are required by
Proposition \ref{prop:P(D,E)}.
(As stated in Proposition \ref{prop:P(D,E)}, these include all elements in
the finite intersections $H_i \cap H_j$ (for $i \ne j$) of pairs of parabolic
subgroups.)

Associated with this choice of $X$, Properties \ref{prop:AC1} and
\ref{prop:AC2}  specify sets $\Psi$ and $\Phi$
of non-geodesic words over $\widehat{X}$ and $X$ respectively, and
associated parameters $\lambda,c$.
We then define $\epsilon=\epsilon(\lambda,c+1,0)$
to be the constant in the conclusion of Property~\ref{prop:BCP}.

We now apply  Proposition~\ref{prop:P(D,E)} to find an \eda $R$ for
$G$ that solves $WP(G,X)$ and that satisfies $\P(D,E)$ for parameters $D,E$
with $D \ge E$, where $E >\max\{ \epsilon, 2\}$, 
and $E$ is also greater than the length of any word in $\Phi$,
and greater than the $X$-length of any component of any word in
$\Psi$. (The reasons for these conditions will become clear during the proof.)

We can use all properties of $R$ that are proved in
\cite[Section 5]{GoodmanShapiro} and we observe in particular that,
by \cite[Lemma 48]{GoodmanShapiro}, for a word $w \in X^*$, if
$R(w) = u_1vu_2$ where $v$ is a maximal $Z_i$-subword for some $i \in I$,
then there exists $v' \in X_i^*$ with $R_i(v') = v$,
where $R_i$ is the associated \eda for $H_i$. So $v$ unambiguously represents
the element $v' \in H_i$. 

We create an \eda $S$ for $G$ with respect to $H$ by adding to $R$ all
rules of the form $Hz \rightarrow H$ with $z \in Z_0$.
Since none of these new rules actually applies to words over $X$,
it is clear that the \eda $S$ also satisfies $\P(D,E)$.
We shall verify that $S$ solves $\GWP(G,H,X)$, and that it can be programmed
on a real-time Turing machine.

Now suppose that $w \in X^*$ and that $S(Hw) = Hw_1$.
In order to verify that our \eda $S$ solves $\GWP(G,H,X)$
and can be programmed on a real-time Turing machine, it is sufficient by
Proposition~\ref{prop:realtime} to establish the existence of a
constant $k$ that is independent of the choice of $w$, such that
\[ |w_1| \leq k \min \{ |g|_X : g \in Hw \}.\quad (\dagger) \] 
So the aim of the rest of the proof is
to prove the inequality $(\dagger)$.

Observe that a maximal subword $p_i$ of $w_1$ that is written over $Z_i^*$
for some $i$ may contain symbols from $Z_i \setminus X_i$, but any such
symbols must have arisen from application of the rules in the \eda $R_i$ to
words over $X_i^*$, and so $p_i$ unambiguously represents an element of $H_i$.
Following \cite{GoodmanShapiro}, we decompose $w_1$ as a concatenation
\[ w_1=v_0p_1v_1\cdots p_mv_m, \quad(*) \]
where $p_1$ is defined to be the first subword of $w_1$ (working from the left)
that is written over $Z_{i_1}$ for some $i_1 \in I$,
has maximal length as such a subword, and represents an
element of $H_{i_1}$ of $X_{i_1}$-length greater than $E$.
(Note that the words $v_i$ are denoted by $g_i$ in \cite{GoodmanShapiro}.)
The subwords $p_i$ for $i>1$ are defined correspondingly with respect to the
suffix remaining after removing the prefix $v_0p_1v_1\cdots p_{i-1}$ from $w_1$.

Then any (maximal) subword of any $v_j$ that is written over any $Z_i$
represents an element of $H_i$ of $X_i$-length at
most $E \leq D$, and so $\P(D,E)\,(3)$ ensures that the subword is
written over $X_i$ and is an $X_i$-geodesic. Hence 
$v_j$ is a word over $X$ with no parabolic shortenings.
Then (since $E>2$) property $\P(D,E)\,(4)$ ensures that $v_j$
is also a 2-local geodesic. Also, since $\Phi$ is a set of non-geodesic
words over $X$ of length at most $E$,
$v_j$ cannot contain any subword in $\Phi$. It follows by
Property~\ref{prop:AC2} that each $\widehat{v_j}$ is a 2-local geodesic.

Now, for each $j$, choose $q_j$ to be a geodesic word over $X_{i_j}$ that
represents the same element $h_j$ of $H_{i_j}$ as $p_j$
(so, by Property \ref{prop:AC2}, $q_j$ is also an $X$-geodesic).
Define \[ w_2 = v_0q_1v_1\cdots q_mv_m.\]
We already observed that each $v_j$ is written over $X$, and hence so is $w_2$.
Now according to \cite[Lemma 23]{GoodmanShapiro}, 
the $X$-lengths of non-identity elements of $H_i$ are bounded below by an
exponential function on the lengths of words over $Z_i$ that are their
reductions by the \eda for $H_i$.
So there is certainly a
positive constant $k_1$ 
such that $|q_j| \ge k_1 |p_{i_j}|$ for all $j$,
and hence $|w_2| \ge k_1|w_1|$.
Hence it is sufficient to prove the inequality $(\dagger)$ above for the word
$w_2$ rather than $w_1$,
that is, for some $k'$, show that
\[ |w_2| \leq k' \min \{ |g|_X : g \in Hw \}.\quad (\dagger\!\dagger) \] 

With this in mind, our next step is to construct a word $\widetilde{w_2}$
over $\widehat{X}$, representing the same element of $G$ as $w_2$,
and for which we can use Property~\ref{prop:AC1}.
We define
\[\widetilde{w_2} = \widehat{v_0}h_1\widehat{v_1}\cdots h_m\widehat{v_m}.\]
Note that we would have $\widetilde{w_2}=\widehat{w_2}$ if the subwords $q_j$
were parabolic segments of $w_2$, but this might not be true if the
first generator in some $q_j$ were in more than one parabolic subgroup,
and then we could not be sure that $\widehat{w_2}$ would satisfy the required
conditions. We call the process of conversion of $w_2$ to $\widetilde{w_2}$
{\em modified compression} and we call the subwords $q_j$ of $w_2$
together with the parabolic segments of the subwords $v_j$ the
{\em modified parabolic segments} of $w_2$. Observe that these
modified parabolic segments are all $X$-geodesics.

We want to apply Property~\ref{prop:AC1} to $\widetilde{w_2}$, so we must show
first that $\widetilde{w_2}$ is a 2-local geodesic over $\widehat{X}$.
If not, then $\widetilde{w_2}$ has a non-geodesic subword
$\zeta$ of length 2, equal in $G$ to an $\widehat{X}$-geodesic word $\eta$ of
length at most $1$.  We saw earlier that the subwords $\widehat{v_j}$ are
2-local geodesics, so $\zeta$ must contain some $h_j$; that is, $\zeta=yh_j$ or
$\zeta = h_jy$, where $y \in \widehat{X}$.
Then the definition of the $p_j$ as maximal $Z_{i_j}$-subwords of $w_1$
ensures that $y \not \in H_{i_j}$, so $|\eta| = 1$. 
So, since
$\lambda,c+1 \ge 1$, $\zeta$ and $\eta$ are both $(\lambda,c+1)$-quasigeodesics.
But now, since $h_j$ is a component in $\zeta$ of $X$-length greater
than $E>\epsilon = \epsilon(\lambda,c+1,0)$,
we can apply Property~\ref{prop:BCP} to the paths in $\HatCay$ labelled
by $\zeta$ and $\eta$, and deduce that $\eta$ contains a component connected
to the component $h_j$, and so $\eta$ represents an
element of $H_{i_j}$; hence $\zeta \in H_{i_j}$, and we have a contradiction.
(Alternatively, we could apply \cite[Lemma 4.2]{AntolinCiobanu} to deduce
that $|\zeta|=2$, a contradiction.)

To verify the second requirement of Property~\ref{prop:AC1}, we need to check
that $\widetilde{w_2}$ contains no subword in $\Psi$.
So suppose that $\xi$ is such a subword in $\Psi$.
Then $\xi$ cannot contain any of the generators $h_j$, since $h_j$ would
then be a component in $\xi$ of $X$-length greater than $E$, by the conditions
imposed on the decomposition $(*)$ of $w_1$; but this contradicts the
choice of $E$ earlier in this proof
to be greater than the $X$-length of any component of any word in $\Psi$.
So $\xi$ must be a subword of some $\widehat{v_j}$; but in that case, 
$\xi=\widehat{u}$ for some subword $u$ of $v_j$.
We saw earlier that $v_j$ has no parabolic shortenings, and hence
neither does $u$.
So from the definition of $\Phi_2$ we have $u \in \Phi_2 \subseteq \Phi$.
But we also observed earlier that $v_j$ has no subword in $\Phi$, so 
we have a contradiction.

It now follows using Property \ref{prop:AC1} that $\widetilde{w_2}$ is a
$(\lambda,c)$-quasigeodesic over $\widehat{X}$ without vertex-backtracking.

Let $w_3$ be a geodesic over $X$ that represents
an element of $Hw_2 = Hw$; since it is geodesic, $w_3$ cannot contain any
subwords in $\Phi$, and it cannot have any parabolic shortenings. So we can
apply Property~\ref{prop:AC2} to deduce that $\widehat{w_3}$ is a 2-local
geodesic over $\widehat{X}$  and a $(\lambda,c)$-quasigeodesic without
vertex-backtracking.

Now $\widetilde{w_2} =_G h\widehat{w_3}$ for some $h \in H$
and so, since $\widetilde{w_2}$ is a $(\lambda,c)$-quasigeodesic,
\[ |\widetilde{w_2}| \leq \lambda |h\widehat{w_3}|+c. \]

We note that $\widetilde{w_2}$ and $h\widehat{w_3}$ are both
$(\lambda,c+1)$-quasigeodesics over $\widehat{X}$ without vertex-backtracking,
and the initial and terminal vertices of the paths in $\HatCay$ that
they label coincide.  So we have Properties \ref{prop:BCP}\,(2) and (3)
concerning the components of the two paths.
(This is why we chose $\epsilon =\epsilon(\lambda,c+1,0)$ at the beginning
of the proof.)

We choose a geodesic word $w_h$ over $X_0$ that represents $h$, 
and consider the words $w_2$ and $w_hw_3$.
We want to compare the lengths of $w_2$ and $w_hw_3$, and we do this by
examining the processes of (modified) compression of $w_2$ and $w_hw_3$ to
$\widetilde{w_2}$ and $\widehat{w_hw_3}=h\widehat{w_3}$. 
The word $w_2$ can be decomposed as a concatenation of
disjoint subwords that are its {\em long} modified parabolic segments, its
{\em short} modified parabolic segments and its maximal subwords over $X
\setminus X_I$; we define a modified parabolic segment to be {\em long} if its
length is greater than $4 \epsilon$ and {\em short} otherwise.

Now modified compression reduces the total length of subwords of the second
type, which are replaced by single elements of $\widehat{X}$, by a
factor of at most $4\epsilon$, while the subwords of the third type are
unchanged.
So the total length of the subwords of $w_2$ of the second and third
types is bounded by 
\[ 4\epsilon | \widetilde{w_2}| \leq 4 \epsilon(\lambda
|h\widehat{w_3}| +c) \leq 4 \epsilon (\lambda | w_hw_3| + c),\]
where the second equality follows from the fact that $h\widehat{w_3}
= \widehat{w_hw_3}$.

Now let $u$ be a long modified parabolic segment of $w_2$. 
Then Property \ref{prop:BCP} applied to $\widetilde{w_2}$ and $h\widehat{w_3}$ 
ensures that there is a corresponding parabolic segment $u'$ of
$w_hw_3$ such that $\widehat{u}$ and $\widehat{u'}$ are connected components
of length $1$ of $\widetilde{w_2}$ and $h\widehat{w_3}$
Then since $\widehat{u}$ and $\widehat{u'}$ must be $\epsilon$-similar, it
follows that the initial and terminal points of $u'$ must be
within $X$-distance $\epsilon$ of the initial and terminal points
(respectively) of $u$.
Then $4\epsilon < |u| \leq 2\epsilon + |u'|$, and so $|u| \leq 2|u'|$
(see Fig. \ref{fig:BCP}).
\begin{figure} 
\begin{center}
\setlength{\unitlength}{1.0pt}%
\begin{picture}(200,120)(-79,-60)%
\put(-140,60){$\widetilde{w_2}$ (quasi-geodesic)}%
\put(-145,0){\circle*{5}}%
\put(-147,-13){$\IdHatCay$}%
\put(-140,-60){$h\widehat{w_3}$ (quasi-geodesic)}%
\put(5,58){component $\hat{u}$, $|u|_X > 4\epsilon$}%
\put(30,-55){$\widehat{u'}$}%
\put(-145,0){\line(2,1){100}}
\put(-45,50){\line(1,0){145}}
\put(100,50){\line(3,-1){60}}
\put(160,30){\line(1,-1){30}}
\put(-145,0){\line(3,-2){60}}
\put(-85,-40){\line(1,0){195}}
\put(110,-40){\line(2,1){80}}
\multiput(0,-40)(3.32,19.95){5}{\line(1,6){1.7}}
\multiput(70,-40)(6.65,19.95){5}{\line(1,3){3.4}}
\put(15,3){$\le \varepsilon$}
\put(95,3){$\le \varepsilon$}
{\thicklines
\put(15,50){\line(1,0){85}}
\put(0,-40){\line(1,0){70}}
}
\end{picture}%
\end{center}%
\caption{The paths $\widetilde{w_2}$ and $h\widehat{w_3}$ in
$\HatCay$: bounded coset penetration}
\label{fig:BCP}
\end{figure}

Since $\widetilde{w_2}$ does not backtrack, distinct components of
$\widetilde{w_2}$ must correspond to distinct components of $h\widehat{w_3}$,
and we deduce that the total length of the long
parabolic segments in $w_2$ is bounded above by $2|w_hw_3|$. So
\[ |w_2| \leq (4 \epsilon \lambda +2) | w_hw_3| + 4\epsilon c.\]

Now we consider $w_h$, which is a parabolic segment of $w_hw_3$. 
If $|w_h| > \epsilon$ then Property \ref{prop:BCP} applied to (the paths
labelled by) $h\widehat{w_3}$ and $\widetilde{w_2}$ ensures the existence of
a corresponding parabolic segment $u_2$ in $w_2$, whose initial vertex is within
$X$-distance $\epsilon$ of the basepoint $\IdHatCay$ of the Cayley graph
$\HatCay$, and whose terminal vertex must be in $H$. Let $w_2$
factorise as a concatenation of subwords $u_1u_2u_3$. 
Since $u_1u_2$ is a prefix of $w_2$ that represents an element of $H$,
the fact that $\widetilde{w_2}$ does not vertex-backtrack ensures that its
subword $\widehat{u_1}\widehat{u_2}$ must have length at most 1, and hence
$u_1$ is empty. But now the prefix $u_2$ of $w_2$ is written over the
generators of $H$, and $w_2$ cannot have a non-trivial such prefix, since
$w_1$ (from which it was derived) was reduced by the \eda $S$, and we have
a contradiction.

So now $|w_h| \leq \epsilon$, and we can deduce from the inequality
above that
\[ |w_2| \leq A |w_3| + B \] for some constants $A,B$.
Provided that $|w_3| \neq 0$, it follows that
\[ |w_2| \leq (A+B)|w_3|. \] 
But if $|w_3|=0$, then $w_2$ must represent an element of $H$ and so, since
$\widetilde{w_2}$ has already been proved not to vertex-backtrack,
$\widetilde{w_2}$ must be a word written over $H$ of length at most 1. It
follows that $w_2$ is a word over $X_0$, and so, just as above, we deduce that
$|w_2|=0$, and so the same inequality holds.  This
completes our verification of the condition of $(\dagger\!\dagger)$ (and hence $(\dagger)$),
and the theorem is proved.


\section{The proof of Theorem \ref{thm:qcsubhyp} (i)}
\label{sec:qcthmi}
Let $H$ be a quasiconvex and almost malnormal subgroup of a hyperbolic group
$G = \langle X \rangle$. It is observed in
\cite[Section 1, Example (III)]{Osin06} that $G$ is hyperbolic relative to
$\{ H \}$ so we can apply the results of Section \ref{sec:relhypgps}. 
The proof of Theorem \ref{thm:qcsubhyp} (i) is very similar to that
of Theorem \ref{thm:relhyp} (although we are no longer assuming that $H$ is
virtually abelian) but is more straightforward, so we shall only
summarise it here. In particular, we have $Z=X$ so the complications arising
from the elements of $Z \setminus X$ that do not necessarily represent group
elements do not arise.

We start by extending $X$ as before to include the finite subset $\H'$ of
$\H$  defined in Property \ref{prop:AC2}. Since we are not applying
Proposition \ref{prop:P(D,E)} in this proof, the other adjustment to $X$
is not necessary. We define $\Psi, \Phi, \lambda, c, \epsilon, E$ as
before and put $D=E$.

The standard Dehn algorithm for solving $\WP(G,X)$
consists of all rules $u \to v$ with $u,v \in X^*$ such that $u =_G v$ and
$4\delta \ge |u| > |v|$, where $\delta$ is the `thinness' constant of
$G$ with respect to $X$ (i.e. all geodesic triangles in $\Cay$ are
$\delta$-thin); see \cite[Theorem 2.12]{AL}.
Words $w$ that are reduced by this algorithm are $4\delta$-local geodesics,
and it is proved in \cite[Proposition 2.1]{Holt} that, if $w$ represents
the group element $g$, then $|w| \le 2|g|$.

We define our Dehn algorithm $R$ for $\WP(G,X)$ to consist of all rules $u \to v$ as above,
with $k \ge |u| > |v|$, where $k = \max(2D,4 \delta)$.
Then $R$-reduced words have the property that subwords representing
group elements of $X$-length at most $D$ are $X$-geodesics.
Since $D=E$, it is also true that $R$-reduced words of length at most $E$ are
geodesic, so $R$ has the required property $\P(D,E)$.
As in the previous proof, we define $S$ to be the \eda for
$\GWP(G,H,X)$ consisting of $R$ together with rules $Hx \to H$ for all
$x \in X_0 := X \cap H$.

As before, we suppose that $S$ reduces the input word $Hw$ to $Hw_1$ and
define the decomposition $(*)$ of $w_1$ with $p_i$ being maximal $X_0$-subwords
of $w_1$ that represent group elements of $X$-length greater than $E$.
Again we let $q_i$ be geodesic words over $X_0$ (and hence also over $X$)
with $q_i =_G p_i$. By \cite[Proposition 2.1]{Holt}, we have
$|q_i| \ge k_1|p_i|$ with $k_1 = 1/2$, so again we have
$|w_2| \ge k_1|w_1|$. The remainder of the proof is identical to that
of Theorem \ref{thm:relhyp}. 

\section{Proof of Theorem \ref{thm:qcsubhyp} (ii)}\label{sec:qcthmii}
As we did for Part (i) of this theorem, we prove Theorem \ref{thm:qcsubhyp}
(ii) by constructing an \eda over the alphabet $X \cup \{ H \}$,
where $G = \langle X \rangle$. As in the two earlier proofs, we verify that
the conditions of Proposition~\ref{prop:realtime} hold, to complete the proof.

For a finite (inverse-closed) generating set $X$ of an arbitrary group $G$, we
define an {\em $X$-graph} to be a graph with directed edges labelled by
elements of $X$, in which, for each vertex $p$ and each $x \in X$, there is a
single edge labelled $x$ with source $p$ and, if this edge has target $q$,
then there is an edge labelled $x^{-1}$ from $q$ to $p$. So the {\em Cayley
graph} $\Cay(G,X)$ and, for a subgroup $H \le G$, the {\em Schreier graph}
$\Sch(G,H,X)$ of $G$ with respect to $H$ are examples of $X$-graphs. 
We shall denote the base points of the Cayley and Schreier graphs by $\IdC$ and
$\IdS$ respectively.

Following \cite[Chapter 4]{Foord}, for $k \in \N$, we define the condition
$\GIB(k)$ (which stands for \emph{group isomorphic balls}) for $\Sch:=\Sch(G,H,X)$ as follows.
\begin{quote}
$\GIB(k)$: there exists $K \in \N$
such that, for any vertex $p$ of $\Sch$ with $d(\IdS, p) \ge K$,
the closed $k$-ball $B_k(p)$ of $\Sch$ is $X$-graph isomorphic to the $k$-ball
$B_k(\IdC)$ of $\Cay(G,X)$.
\end{quote}
We say that $\Sch$ satisfies
$\GIB(\infty)$ if it satisfies $\GIB(k)$ for all $k \ge 0$.
The following result is proved in \cite[Theorem 4.3.1.1]{Foord}.
Since its proof may not be readily available, we shall sketch it in
Section \ref{sec:foord}.

\begin{proposition}\label{prop:foord}
Let $H$ be a quasiconvex subgroup of the hyperbolic group $G$. Then
$\Sch(G,H,X)$ satisfies $\GIB(\infty)$ if and only if,
for all $1 \ne h \in H$, the index
$|C_G(h):C_H(h)|$ is finite.
\end{proposition}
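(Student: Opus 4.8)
The plan is to translate the combinatorial condition $\GIB(\infty)$ into a statement about which conjugates of a fixed element land in $H$, then evaluate that statement by a short orbit‑counting computation, isolating the geometric input (hyperbolicity and quasiconvexity) into a single finiteness lemma. There is a natural covering $\pi\colon\Cay(G,X)\to\Sch$ of $X$‑graphs sending $g$ to $Hg$, with deck group $H$ acting by left multiplication. Since left multiplication by $G$ gives label‑preserving automorphisms of $\Cay$ acting transitively on vertices, every ball $B_k(g)$ is isomorphic to $B_k(\IdC)$, so $B_k(Hg)\cong B_k(\IdC)$ exactly when $\pi$ is injective on $B_k(g)$. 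This injectivity fails precisely when two distinct vertices of $B_k(g)$ lie in one right coset, i.e. when there is $1\ne u$ with $|u|_X\le 2k$ (up to the harmless $\pm1$ shift in radius needed to match edges as well) and $gug^{-1}\in H$. Hence $\GIB(\infty)$ fails if and only if there is some $1\ne u\in G$ for which $T_u:=\{g\in G: gug^{-1}\in H\}$ contains cosets arbitrarily far from $\IdS$ in $\Sch$. Choosing $g_0\in T_u$ and setting $h:=g_0ug_0^{-1}\in H\setminus\{1\}$ gives $T_u=T_h\,g_0$, a right translate; since $d(\IdS,Hg)$ changes by at most $|g_0|_X$ under right translation, boundedness is unaffected and I may assume $u=h\in H$.

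Next I would convert boundedness into finiteness. As $|X|<\infty$, the graph $\Sch$ is locally finite, so a set of its vertices lies at bounded distance from $\IdS$ if and only if it is finite; and $T_h$ is closed under left multiplication by $H$, hence is a union of right cosets, i.e. a genuine vertex set of $\Sch$. Thus $T_h$ is unbounded iff $|H\backslash T_h|=\infty$. I would then compute $|H\backslash T_h|$ by partitioning $T_h$ through the $H$‑equivariant map $g\mapsto ghg^{-1}$ onto $C_h:=\{ghg^{-1}:g\in G\}\cap H$, where left multiplication corresponds to $H$‑conjugation. The fibres of this map are left cosets of $C_G(h)$, and counting $H$‑orbits over each $H$‑conjugacy class in $C_h$ (the orbits over the class of $h'$ being $C_H(h')$‑orbits on the fibre, which number $|C_G(h'):C_H(h')|$) yields the identity
\[|H\backslash T_h|=\sum_{[h']\in H\backslash C_h}|C_G(h'):C_H(h')|.\]

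The forward implication is now formal: if $\GIB(\infty)$ holds then every $T_h$ is bounded, so $|H\backslash T_h|<\infty$, and extracting the term $[h']=[h]$ gives $|C_G(h):C_H(h)|<\infty$ for each $1\ne h\in H$. For the converse, assume that index is finite for all $1\ne h\in H$. Every summand $|C_G(h'):C_H(h')|$ is then finite, since each $h'\in C_h\subseteq H$ is $G$‑conjugate to $h\ne1$ and so is itself nontrivial; it remains only to bound the \emph{number} of summands. That reduces the whole converse to the finiteness lemma: a quasiconvex subgroup $H$ of a hyperbolic group $G$ meets each $G$‑conjugacy class in only finitely many $H$‑conjugacy classes, i.e. $|H\backslash C_h|<\infty$. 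I expect this lemma to be the main obstacle, and it is the only place where hyperbolicity and quasiconvexity are genuinely used. I would prove it geometrically: given $h'=tht^{-1}\in H$, replace $t$ by the shortest element of $Ht$, so that $\IdC$ is a nearest point of $H$ to $t$; the relation $th=h't$ forces the geodesics $[\IdC,t]$ and $h'[\IdC,t]$ to fellow‑travel (a thin quadrilateral, since the side $[t,th]$ has length $|h|$), and combining this with the quasiconvexity of $H$ bounds the minimal length in the $H$‑class of $h'$, leaving finitely many classes. Note that for $h$ of infinite order this is automatic, as $C_G(h)$ is then virtually cyclic and contains $\langle h\rangle\le H$, so the real content of the lemma lies with the torsion case.
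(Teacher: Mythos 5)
Your proof is correct in outline, but it takes a genuinely different route from the paper's. The paper never isolates your orbit-counting identity or your finiteness lemma. For the forward direction it argues directly: infinitely many cosets of $C_H(h)$ in $C_G(h)$ give vertices $Hv$ ($v\in C_G(h)$) arbitrarily far from $\IdS$ carrying a loop labelled by a word for $h$, so $\GIB(|h|_X)$ fails. For the converse it extracts a single word $w\ne_G 1$ labelling loops at infinitely many far-away vertices $p$, takes a $\Sch$-geodesic $v$ from $\IdS$ to $p$, forms $h\in H$ with $hvw=v$, bounds $|h|$ by a thin-quadrilateral-plus-quasiconvexity argument (using that $v$, being a $\Sch$-geodesic, stays far from $H$ except near its start, which is the same trick as your choice of $t$ shortest in $Ht$), and then uses fellow-travelling of the two $v$-sides to produce arbitrarily many cosets of $C_H(h)$ in $C_G(h)$ for one of the boundedly many possible $h$. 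Your identity $|H\backslash T_h|=\sum_{[h']\in H\backslash C_h}|C_G(h'):C_H(h')|$ replaces that last fellow-travelling construction by pure bookkeeping and makes the logical content of $\GIB(\infty)$ completely transparent; the price is that you must prove that $H$ meets each $G$-conjugacy class in only finitely many $H$-classes, a true (and standard) fact which the paper's argument avoids needing in this generality.

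Two points in your sketch of that lemma need care. First, in the quadrilateral with corners $1,t,th,h'$ the side from $1$ to $h'$ has length $|h'|$, which is exactly what you are trying to bound, so thinness alone does not force $[1,t]$ and $h'[1,t]$ to fellow-travel: a long initial portion of $[1,t]$ could a priori be absorbed by the side $[1,h']$. You must first combine quasiconvexity of $H$ (so $[1,h']$ lies in the $\lambda$-neighbourhood of $H$) with the inequality $d(H,p)\ge d(1,p)$ for $p\in[1,t]$ (a consequence of choosing $t$ shortest in $Ht$) to see that only an initial segment of $[1,t]$ of length about $2\delta+\lambda$ can come $2\delta$-close to $[1,h']$; after that the estimate closes up and gives $|h'|\le |h|+8\delta+2\lambda+O(1)$, which is what you need. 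Second, your closing remark is misleading: the finiteness of $|H\backslash C_h|$ is still required when $h$ has infinite order (the summands are then automatically finite, but there must also be finitely many of them, or $T_h$ is unbounded and $\GIB(\infty)$ fails with all centraliser indices finite), and this does not follow from $C_G(h)$ being virtually cyclic. Fortunately your geometric argument, once repaired as above, covers the infinite-order and torsion cases uniformly.
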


Suppose that $G,H$ satisfy the hypotheses of Theorem~\ref{thm:qcsubhyp} (ii).
So $\Sch := \Sch(G,H,X)$ satisfies $\GIB(\infty)$ and, by \cite[Theorem
4.1.3.3]{Foord} or \cite{Kapovich}, $\Sch$ is $\delta$-hyperbolic for some
$\delta>0$ (that is, geodesic triangles in $\Sch$ are $\delta$-thin).

Let $k$ be an integer with $k \geq 4\delta$.
Let $K$ be an integer that satisfies the condition in the definition of
$\GIB(k)$, and let $R=2K$. We can assume that $K \ge \max(k,2)$.
We define our \eda to consist of all rules of the following two forms:
\begin{eqnarray*}
Hv_1 \rightarrow Hv_2,&& |v_2|<|v_1| \leq R,\quad(1) \\
u_1 \rightarrow u_2,&& |u_2| < |u_1|\leq k,\quad(2)
\end{eqnarray*}
where $v_1,v_2,u_1,u_2 \in X^*, v_1v_2^{-1} \in H, u_1=_G u_2$.

In order to apply Proposition~\ref{prop:realtime}
we need to verify that,
whenever $w \in X^*$ and $Hw$ is reduced according to the above \eda,
the length of the shortest string $v$ over $X$ with $Hv=Hw$ is bounded below
by a linear function of $|w|$. 

We shall use \cite[Proposition 2.1]{Holt}: if $u$ (of length $>1$) is a
$k$-local geodesic in a $\delta$-hyperbolic graph, with $k\geq 4\delta$, then
the distance between the endpoints of $u$ is at least $|u|/2 + 1$.

So suppose that $Hw$ is reduced according to the \eda.
If $w$ has length at most $R$, then since $Hw$ is reduced by rules of type (1),
$Hw$ is geodesic in $\Sch$, and
the inequality in Proposition~\ref{prop:realtime} holds with $k=1$.

So suppose that $|w|>R$, and let $w_1$ be the prefix of length $R$ of
$w$.  We aim to show that every vertex of $\Sch$ that comes after $w_1$
on the path from $\IdS$ labelled $w$ lies outside of $B_K(\IdS)$.
Choose $w_2$ so that $w_1w_2$ is maximal as a prefix of $w$ subject to all
vertices of $w_2$ lying outside of $B_K(\IdS)$.
Then, since $w_1$ is geodesic of length $R=2K$, we have $|w_2| \geq K-1$,
and so $|w_1w_2| \geq 3K-1$. Since $w_1$ is geodesic in $\Sch$, and
that part of the
path labelled $w_1w_2$ that lies outside of the $K$-ball is a $k$-local
geodesic in $\Sch$ (because, by $\GIB(k)$, it is isometric to the corresponding
word in the Cayley graph, and
our inclusion of the rules of type (2) in the \eda ensures that
the reduced words over $X$ of length $\leq k$ are geodesics),
we see that the whole of the path labelled $w_1w_2$ is a $k$-local geodesic
in $\Sch$.
So we can apply \cite[Proposition 2.1]{Holt} to deduce from the
$\delta$-hyperbolicity of $\Sch$ that 
\[ d_{\Sch}(\IdS,Hw_1w_2) \geq |w_1w_2|/2 + 1\geq (3K+1)/2>K+1.\]
It follows that, if $w_1w_2$ were not already equal to $w$, then     
it would be extendible to a longer prefix of $w$ subject to all
vertices of $w_2$ lying outside of $B_K(\IdS)$.
So $w_1w_2=w$, and the above inequality gives us the linear lower bound
$d_{\Sch}(\IdS,Hw) \ge |w|/2+1 \ge |w|/2$ on $d_{\Sch}(\IdS,Hw)$.
So the inequality in Proposition~\ref{prop:realtime} holds with $k=2$
and hence, by the preceding paragraph, it holds with $k=2$ for all
words $w$ such that $Hw$ is reduced according to the \eda.
The result now follows from Proposition~\ref{prop:realtime}, and this
completes the proof of Theorem~\ref{thm:qcsubhyp} (ii).

\section{Proof of Theorem \ref{thm:subvf}} \label{sec:subvf}
Let $F$ be a free subgroup of $G = \langle X \rangle$ with $|G:F|$ finite,
and let $Y$ be the inverse closure of a free generating set for $F$;
that is the union of a free generating set with its inverses.
Let $K=F \cap H$.
The subgroup $K$ has finite index in $H$, 
and so must (like $H$) be finitely generated.
It easy to see that the elements in any right transversal of $K$ in $H$ lie
in different cosets of $F$ in $G$, so we can extend a right transversal
$T'=\{t_1,\ldots,t_m\}$ of $K$ in $H$ to a right transversal
$T = \{t_1,\ldots,t_n\}$ of $F$ in $G$. 
Then any word $w \in X^*$ can be expressed (in $G$) as a word in $U^*T$,
where $U$ is the set $\{ u(i,x) : 1 \le i \le n,\,x \in X \}$ of Schreier
generators for $F$ in $G$ defined by the equations $t_i x= u(i,x)t_j$.
(Note that $X$ inverse-closed implies that $U$ is inverse-closed.)
Then, by substituting the reduced word in $Y^*$ for each $u(i,x)$, the word
$w$ can be written as a word $vt$ in $Y^*T$
(where $v$ is not necessarily freely reduced).

The first step to recognise whether $w \in H$ is to rewrite it to the form
$vt$, as above, using a transducer. Then $w \in H$ if and only if $t \in
T'$ and $v \in K$.  It remains for us to describe the
operation of a deterministic pushdown automaton (\pda) $N$ to recognise those
words $v$ in $Y^*$ that lie in the subgroup $K$ of the free group $F$.
Note that this machine operates simultaneously, rather than sequentially, with
the transducer, and it follows from the fact that context-free languages are
closed under inverse {\gsm}s \cite[Example 11.1, Theorem 11.2]{HopcroftUllman}
that the combination of the two machines is a \pda.

By \cite{AnS75} or \cite[Proposition 4.1]{GerstenShort91b}, any finitely
generated subgroup $K$ of a free group is {\em $L$-rational}, where $L$ is the
set of freely reduced words over a free generating set; that is, the set
$K \cap L$ is a regular language.
In our case, we choose $L$ to ber the freely reduced words over $Y$.

We shall build our \pda $N$ out of a finite state automaton (\fsa) $M$ for which
$L(M) \cap L = K \cap L$ (where $L(M)$ is the set of words accepted by $M$).
The construction is (in effect) described in the proof of
\cite[Theorem 2.2]{GerstenShort91b} that $K$ is $L$-quasiconvex. 
The $L$-quasiconvexity condition is equivalent to the property that
all prefixes of freely reduced words that represent elements of $K$ lie within a
bounded distance of $K$ in the Schreier graph $\Sch := \Sch(F,K,Y)$.
Equivalently, a freely reduced  word $v$ over $Y$ represents an element
of $K$ precisely if it labels a loop in $\Sch$ from $\IdS$ to $\IdS$
that does not leave a particular bounded neighbourhood $B=B_d(\IdS)$ of $\IdS$.

Suppose that $g_1=1$ and that $K=Kg_1,Kg_2,\ldots,Kg_r$ are the right cosets 
corresponding to the bounded neighbourhood $B$ of $\IdS$ within $\Sch$ that is
identified above. 
The \fsa $M$ is defined as follows.
\begin{mylist}
\item[(i)] The states of $M$ are denoted by $\sigma_1,\ldots,\sigma_r,
\hat{\sigma}$.
\item[(ii)] The states $\sigma_1,\ldots,\sigma_r$ correspond to the
right cosets $Kg_i$ of $K$ in $F$, where each $g_i$ is in the
finite subset $B$ identified above; indeed we may identify $\sigma_i$ with
the coset $Kg_i$, and then use the name $B$ both for the set
$\{Kg_1,\ldots,Kg_r\}$ of cosets and for the set
$\{\sigma_1,\sigma_2,\ldots,\sigma_r\}$ of states. The state $\sigma_1$ (which
corresponds to the subgroup $K$) is the start state
and the single accepting state.
\item[(iii)] For $1\le i,j \le r$ and $y \in Y$, there is a transition
$\sigma_i^y = \sigma_j$ if and only if $Kg_iy=Kg_j$. It follows from this that
$\sigma_i^y = \sigma_j$ if and only if $\sigma_j^{y^{-1}} = \sigma_i$.
\item[(iv)] $\hat{\sigma}$ is a failure
state, and is the target of all transitions that are not
defined in (iii), including those from $\hat{\sigma}$.
\end{mylist}

We see that, as a word is read by $M$, the automaton keeps track of
the coset of $\Sch$ that contains $Kw$, where $w$ is the prefix that has been
read so far, so long as that coset is within the finite neighbourhood $B$ of 
$\IdS$,
and in addition so are all cosets $Kw'$ for which $w'$ is a prefix of $w$.
The $L$-quasiconvexity of $K$ ensures that
a word in $L$ is accepted by $M$ if and only
if it represents an element of $K$.
In fact any word over $Y$ that is accepted by $M$ must represent an element of
$K$.

However words over $Y$ that are not freely-reduced (that is, not in $L$)
and do not stay inside of $B$ will be rejected
by $M$, even when they represent elements of $K$.
In order to construct a machine that accepts all words $v$ over
$Y$ within $K$, and not simply those that are also freely-reduced,
we need to combine the operation of the \fsa $M$ above with a stack,
which we use to compute the free reduction.

We construct our
\pda $N$ to have the same state set $B \cup \{\hat{\sigma}\}$ as 
$M$, again with $\sigma_1=Kg_1=K$ as the start state and sole accepting state.
The transitions from the states $\sigma_i=Kg_i$ are as in $M$. We need however
to describe the operation of the stack, and transitions from the state
$\hat{\sigma}$, which is non-accepting, but no longer a failure state.

The stack alphabet is the set $Y \cup (Y \times B)$.  The second component of
an element of $Y \times B$ is used to record the state $M$ is in immediately
before it enters the state $\hat{\sigma}$.
In addition, we use the stack to store the free reduction of
the prefix of $v$ that has been read so far.

The operations of the \pda $N$ that correspond to the various transitions of
$M$ are described in the following table.  The absence of an entry in the
`push' column indicates that nothing is pushed.

\begin{center}
\begin{tabular}{|l| lllll|}
\hline
Transition & Input & Input & Pop & Push & Output \\ 
of $M$ & state & symbol & & &  state \\ 
\hline
$\sigma_i^y = \sigma_j$ & $\sigma_i$ & $y$ &  $y^{-1}$ &  & $\sigma_j$ \\
                   & $\sigma_i$ & $y$ & $y' \neq y^{-1}$ & $y'y$ & $\sigma_j$ \\
\hline
$\sigma_i^y = \hat{\sigma}$ & $\sigma_i$ & $y$ & $y'$ &
   $y'(y,\sigma_i)$ & $\hat{\sigma}$ \\
\hline
$\hat{\sigma}^y = \hat{\sigma}$ & $\hat{\sigma}$ &
   $y$ & $(y^{-1},\sigma_i)$ & & $\sigma_i$ \\
& $\hat{\sigma}$ & $y$ & $(y',\sigma_i),\quad y' \neq y^{-1}$ &
   $(y',\sigma_i)y$ & $\hat{\sigma}$ \\
& $\hat{\sigma}$ & $y$ & $y^{-1}$ & & $\hat{\sigma}$ \\
& $\hat{\sigma}$ & $y$ & $y' \neq y^{-1}$ & $y'y$ & $\hat{\sigma}$ \\
\hline
\end{tabular}
\end{center}

Note that we have not specified that $y'\neq y^{-1}$ in line 3, but in fact the
condition $y'=y^{-1}$ does not arise in this situation.
Since there can be a symbol in $Y \times B$  on the stack only when $N$ is
in state $\hat{\sigma}$, it is not possible to pop such a symbol when
$N$ is in state $\sigma_i$, so there are no such entries in the table.

The fact that $N$ recognises $\GWP(F,K,Y)$ follows from the fact that $N$
accepts $w$ if and only if $M$ accepts $\overline{w}$, where $\overline{w}$ is
the free reduction (in $L$) of $w$.  We prove this by induction on the number
$k$ of reductions of the form $w_1yy^{-1}w_2 \rightarrow w_1w_2$ with $y \in
Y$ that we need to apply to reduce $w$ to $\overline{w}$.

The case $k=0$ of our induction follows from the fact that $L(M)\cap L=K\cap L$,
combined with the observation that, if $w$ is freely reduced, then
$w$ leads to the same state of $M$ as it does of $N$. For in that case
the only possible transitions as we read $w$ are of the types
described in lines 2,3,5,7 of the table.

For $k>0$ it is enough to prove the statement
\begin{quote}
$(*)$:  if $wy \in L$, then the configuration of $N$ after reading $wyy^{-1}$
is identical to the configuration after reading $w$.
\end{quote}
It follows from $(*)$ that a word $w_1yy^{-1}w_2$ in which $yy^{-1}$
is the leftmost cancelling pair is accepted by $N$ if and only if $w_1w_2$ is accepted by
$N$, and hence we have the inductive step we need.

We can check the statement $(*)$ with reference to the table.  There
are up to seven possibilities for the type of transition of $N$ as the final
symbol $y$ of $wy$ is read.

For the first two of these,
$N$ is in state $\sigma_i$ after reading $w$,
and moves to a state $\sigma_j$.
Since $wy$ is in $L$, the top stack symbol after reading $w$ is not $y^{-1}$.
Hence the transition must be of the type described in line 2 of the table,
and not as in line 1, that is, $y' \neq y^{-1}$ is popped, and then $y'y$ is
pushed. 
Recalling that $\sigma_i^y = \sigma_j$ in $M$ if and only if
$\sigma_j^{y^{-1}}=\sigma_i$, we see that the next transition of $N$,
from $wy$ on $y^{-1}$, is of the type described in line 1. Then the symbol
$y$ is popped, the symbol $y'$ is again on the top of the stack,
and $N$ returns to the state $\sigma_i$

We consider similarly the remaining five possibilities for the transition from
$w$ on $y'$, and the subsequent transitions on $y'^{-1}$, and verify $(*)$ for
each of those configurations.  
This completes the proof of Theorem~\ref{thm:subvf}.

\section{Proof of Theorem \ref{thm:subvf_conv}} \label{sec:subvf_conv}
Our proof of Theorem \ref{thm:subvf_conv} has the same structure as the proof
in \cite{MullerSchupp83} that groups with $\WP(G)$ context-free are virtually
free, and it would be helpful for the reader to be familiar with that proof.

We shall prove that $G$ has more than one end. Assuming that to be true,
we use Stalling's theorem \cite{Stallings71} to conclude that $G$ has
a decomposition as an amalgamated free product $G = G_1 *_K G_2$, or
as an HNN-extension $G = G_1*_{K,t}$, over a finite subgroup $K$.
Since $G_1$ (and $G_2$) are easily seen to be quasiconvex
subgroups of $G$, it is not hard to show that the hypotheses of the
theorem are inherited by the subgroup $H \cap G_1$ of $G_1$ (and
$H \cap G_2$ of $G_2$), and so they too have more than
one end, and we can apply the Dunwoody accessibility result to
conclude that $G$ is virtually free.

So we just need to prove that $G$ has more than one end.
Fix a finite inverse-closed generating set $X$ of $G$.  Then, as in
\cite{MullerSchupp83}, we consider a context-free grammar in Chomsky normal
form with no useless variables that derives $\GWP(G,H,X)$.
More precisely, we suppose that each rule has the form $\start \to \nullstring$
(where $\start$ is the start symbol), $z \rightarrow z'z''$ or
$z \rightarrow a$, where $z,z',z''$ are variables, and $a$ is terminal, and we
assume that $\start$ does not occur on the right hand side of any
derivation. When a word $w'$ can be derived from a word $w$ by application of
a single grammatical rule we write $w \Rightarrow w'$, and when a
sequence of such rules is needed we write $w \Rightarrow ^* w'$.

Let $z_1,\ldots,z_n$ be the variables of the grammar other than $\start$
and, for each $z_i$ let $u_i$ be a shortest word in $X^*$ with
$z_i \Rightarrow^* u_i$. Let $L$ be the maximum length of the words $u_i$.

Let $w \in \GWP(G,H,X)$ with $|w|>3$, and fix a derivation of $w$ in the
grammar. We shall define a planar $X$-graph $\Delta$ with an associated
$X$-graph homomorphism $\phi:\Delta \to \Sch:= \Sch(G,H,X)$.  We start with a simple
plane polygon with a base point, and edges labelled by the letters of $w$, and
with $\phi$ mapping the base-point of $\Delta$ to $\IdS$. Note that $\phi$ is
not necessarily injective.

If $z_i$ occurs in the chosen derivation of $w$, then we have $w = vv_iv'$
with $z_i \Rightarrow^* v_i$; two such words $v_i$ and $v_j$ are either
disjoint as subwords of $w$ or related by containment.
Since $z_i \Rightarrow^* u_i$, we also have $vu_iv' \in \GWP(G,H,X)$.  So we
can draw a chord labelled $u_i$ in the interior of $\Delta$ between the two
ends of the subpath labelled $v_i$, and $\phi$ extends to this extension of
$\Delta$.
If we do this for each such $z_i$ for which $1 < |v_i| < |w|-1$ then, as in
\cite[Theorem 1]{MullerSchupp83}, we get a `diagonal triangulation' of $\Delta$,
in which the sides are either boundary edges of $\Delta$ or internal
chords of length at most $L$.
(But note that, for the first derivation $\start \to z_1z_2$, say,
if $|v_1|>1$ and $|v_2|>1$ then, to avoid an internal bigon, we
omit the chord labelled $u_2$.)

Suppose, for a contradiction, that $G$ has just one end; that is, for any
$R$, the complement in $\Cay(G,X)$ of any ball of radius $R$ is connected.
Then, for any $R$,
we can find a word $w_1w_2w_3$ over $X$ with $w_1w_2w_3=_G 1$ which, starting
at $\IdC$, labels a simple closed path in
$\Cay(G,X)$, where $|w_1|=|w_3|=R$, $w_3w_1$ is geodesic, and no vertex in
the path labelled $w_2$ is at distance less than $R$ from $\IdC$.

Choose such a path with $R=3L+1$. Choose $k'$ such that the whole path
lies in the ball $B_{k'}(\IdC)$ of $\Cay(G,X)$, and let $k = k'+L$. Then,
since by Proposition \ref{prop:foord} $\Sch(G,H,X)$ satisfies $\GIB(k)$, there
exists $K$ 
such that, for any vertex $p$ of  $\Sch(G,H,X)$ with
$d(\IdS,p) \ge K$, the ball $B_k(p)$ of $\Sch(G,H,X)$ is $X$-graph isomorphic
to the ball $B_k(\IdC)$ of $\Cay(G,X)$.
Choose such a vertex $p$, and consider the path labelled $w_1w_2w_3$ of
$\Sch(G,H,X)$ that is based at $p$, as in Fig. \ref{fig:triangulation}.

Choose a vertex $q$ on the path labelled $w_1w_2w_3$ with $d(\IdS,q)$ minimal,
and let $w_4$ be the label of a geodesic path in $\Sch(G,H,X)$ from
$\IdS$ to $q$. Then, for some cyclic permutation $w'$ of $w_1w_2w_3$, we have
a closed path in $\Sch(G,H,X)$ based at $\IdS$  and labelled $w_4 w' w_4^{-1}$.

We apply the above triangulation process to a planar $X$-graph $\Delta$
for $w_4 w' w_4^{-1}$.
Since $q$ is the closest vertex to $\IdS$ on the loop labelled by $w'$, and the
path from $\IdS$ to $q$ in the Cayley graph labelled by $w_4$ is geodesic, every vertex on that path is
as close to $q$ as to any other vertex of $w'$, and so any vertex of $w_4$ that can be connected by the image of a chord of $\Delta'$ to a vertex of $w'$ must
be within distance at most $L$ of $q$. Let $r$ be the first such vertex on $w_4$
(as we move from $\IdS$ to $q$), and let $w_5$ be the suffix of $w_4$ that labels
the path along $w_4$ from $r$ to $q$.  Then $|w_5|\leq L$.

So we can derive from our triangulation of $\Delta$ a triangulation
of a planar diagram $\Delta'$ for the word $w_5w'w_5^{-1}$, and there is
an associated $X$-graph homomorphism $\phi'$ that maps this to the corresponding
subpath in $\Sch(G,H,X)$. (Note that the images of $w_5$ and $w_5^{-1}$
under $\phi'$ are equal, but that $\phi'$ is injective when restricted to $w'$.)
By our choice of $k = k' + L$, the image of $\phi'$ lies entirely within
$B_k(p)$, which is $X$-graph isomorphic to $B_k(\IdC)$.
So the distances in $\Sch(G,H,X)$ between vertices in this image are the
same as in any path with the same label in $\Cay(G,X)$.


\begin{figure} 
\begin{center}%
\setlength{\unitlength}{1.0pt}%
\begin{picture}(100,210)(-25,40)%
\put(15,45){\circle*{5}}%
\put(20,45){$\IdS$}%
\put(15,45){\line(1,3){37}}%
\put(33.5,100.5){\vector(1,3){0}}%
\put(25,105){\vector(1,3){17}}%
\put(17.5,97.5){$w_4$}%
\put(22,94){\line(-1,-3){15}}%
\put(52,156){\circle*{5}}%
\put(55,145){$q$}%
\put(46,138){\vector(1,3){0}}%
\put(50,133){$w_5$}%
\put(42,126){\circle*{5}}%
\put(45,115){$r$}%
\put(61,163){\vector(-3,1){31}}%
\put(67,151){\vector(-3,1){0}}%
\put(64,159){$w_3$}%
\put(77.5,157.5){\line(3,-1){14}}%
\put(22,166){\circle*{5}}%
\put(25,155){$p$}%
\put(-14,178){\vector(-3,1){0}}%
\put(-13,182){$w_1$}%
\put(97,141){\line(-3,1){141}}%
\qbezier(97,141)(62,320)(-44,188)%
\put(66,226){\vector(1,-1){0}}%
\put(61,233){$w_2$}%
\end{picture}%
\caption{Triangulation in $\Sch(G,H,X)$}
\label{fig:triangulation}
\end{center}%
\end{figure}

As in \cite{MullerSchupp83}, we colour, using three colours, the vertices of
the  boundary paths of $\Delta'$ that are labelled
$w_1,w_2,w_3$ (where vertices on two of these subwords get both associated
colours), and we colour the vertices on $w_5$ and $w_5^{-1}$ with
the same colour (or colours) as  $q$. As in \cite[Lemma 5]{MullerSchupp83},
we conclude that there is a triangle in the triangulation
whose vertices use all three colours between them. One (or even two) of these
vertices could be on the subpath labelled $w_5$, and two 2-coloured vertices
in the triangle might coincide, but, since any vertex on $w_5$ is within
distance $L$ of $q$, replacing vertices on $w_5$ by $q$ as necessary, we end
up with a triangle of three (not necessarily distinct) vertices 
$p_1, p_2, p_3$  with $p_i$ on $w_i$, and
with $d(p_i,p_j) \le 2L$ for each $i,j$. At least one of $p_1,p_3$
must be within distance $L$ of $p$. But then $d(p,p_2) \leq 3L$, contradicting
our assumption that $w_2$ is outside $B_{3L}(p)$.
This completes the proof of Theorem~\ref{thm:subvf_conv}.

\section{Sketch of proof of Proposition \ref{prop:foord}} \label{sec:foord}
Suppose first that $|C_G(h):C_H(h)|$ is infinite for some $1 \ne h \in H$,
and let $w$ be a word representing $h$.
Then, for any $K>0$, there exists a word $v \in C_G(h)$ labelling a
path in $\Sch:=\Sch(G,H,X)$ from $\IdS$ to a vertex $p$ with
$d(\IdS,p)>K$, and there is a loop labelled $w$ based at $p$ in
$\Sch$, but no such loop based at $\IdC$ in $\Cay:= \Cay(G,X)$. So $\GIB(|w|)$
fails in $\Sch$.

Suppose conversely that $\GIB(k)$ fails in $\Sch$ for some $k$.
Then there are vertices $p$ of $\Sch$ at arbitrarily large distance from
$\IdS$ such that the ball $B_k(p)$ in $\Sigma$ is not $X$-graph isomorphic to
the ball $B_k(\IdC)$ in $\Cay$.
So the natural labelled graph morphism $B_k(\IdC) \to B_k(p)$ with
$\IdC \mapsto p$ is not injective, and hence two distinct vertices
of $B_k(\IdC)$ map to the same vertex of $B_k(p)$. 
So, for any such vertex $p$, there is at least one labelled loop based at $p$, 
within $B_k(p)$, such that the corresponding labelled path based at
in $B_k(\IdC)$ in $B_k(\IdC)$ is not a loop in $\Cay$.
Since the number of words that can label loops in a ball of radius $k$ in
$\Sch$ is finite, some word $w$ with $w \ne_G 1$ must label loops based at $p$
for infinitely many vertices $p$ of $\Sigma$,
and we can choose $w$ to be geodesic over $X$. 
Now for any integer $N$, there is a word $v$ of length greater that $N$,
labelling a geodesic in $\Sch$ from $\IdS$ to a vertex $p$, from which there
is a loop in $\Sch$ labelled by $w$.

For such a word $v$, we have $hvw = v$ for some $h \in H$.  Let $u$ be a
geodesic word labelling $h$. Then we have a geodesic quadrilateral 
with vertices $A=\IdC,B,C,D$ in $\Cay(G,X)$ with sides  
$AB$, $BC$, $CD$, $AD$ labelled $u$, $v$, $w$, $v$, respectively,
as shown in Fig. \ref{fig:ABCD}.

\begin{figure} 
\begin{center}
\setlength{\unitlength}{1.0pt}%
\begin{picture}(150,180)(-70,5)%
\put(50,10){\circle*{5}}%
\put(57,8){$A=1_\Cay$}%
\put(-50,10){\circle*{5}}%
\put(-67,8){$B$}%
\qbezier(50,10)(10,15)(12,50)%
\qbezier(-50,10)(-10,15)(-12,50)%
\qbezier(-50,10)(-15,10)(-5,30)%
\qbezier(50,10)(15,10)(5,30)%
\qbezier(-5,30)(0,35)(5,30)%
\put(-2.5,32.2){\vector(-1,0){0}}%
\put(-3,36){$u$}%

\put(-12,50){\line(0,1){85}}%
\put(-12,87){\vector(0,1){0}}%
\put(-25,82){$v$}%
\put(12,50){\line(0,1){85}}%
\put(12,87){\vector(0,1){0}}%
\put(17,82){$v$}%

\put(-40,160){\circle*{5}}%
\put(-57,158){$C$}%
\put(40,160){\circle*{5}}%
\put(47,158){$D$}%
\qbezier(40,160)(12,155)(12,135)%
\qbezier(-40,160)(-12,155)(-12,135)%
\qbezier(40,160)(5,155)(5,135)%
\qbezier(-40,160)(-5,155)(-5,135)%
\qbezier(-5,135)(0,125)(5,135)%
\put(2,130.2){\vector(1,0){0}}%
\put(-5,121){$w$}%
\end{picture}%
\end{center}%
\caption{The geodesic quadrilateral $ABCD$}
\label{fig:ABCD}
\end{figure}


By the hyperbolicity of $G$, each vertex of $AB$ lies within a distance
$2\delta$ of some vertex on $BC$, $CD$ or $DA$, where $\delta$ is the
constant of hyperbolicity. Furthermore, since $H$ is quasiconvex in $G$,
there is a constant $\lambda$, such that each vertex of $AB$ is within a
distance $\lambda$ of a vertex of $\Cay$ representing an element of $H$.
Since each vertex of $w$ lies at distance at least $|v|-k$ from any vertex
in $H$, by choosing $|v| > k+2\delta+\lambda$ we can ensure that none of the
vertices of $AB$ is $2\delta$-close to any vertex of $CD$. So the vertices
of $AB$ must all be $2\delta$-close to vertices in $BC$ or $DA$.
But, since $v$ labels a geodesic path from $\IdS$ in $\Sch$, at most
$2\delta + \lambda$ vertices on $BC$ or on $DA$ can be within $2\delta+\lambda$
of a vertex in $H$. So each vertex of $AB$ is at distance at most $2\delta$
from one of at most $4\delta + 2\lambda$ vertices and, since the total number of
vertices in $\Cay$ with that property is bounded, we see that
$|AB| = |u|$ is bounded by some expression in $|X|$, $\delta$ and $\lambda$.

By hyperbolicity of $G$, the two paths $BC$ and $AD$ labelled $v$
must synchronously $L$-fellow travel for some $L$ (which depends on the upper
bounds on $|w|$ and $|u|$).
Let $m>0$. Then, by choosing $v$ sufficiently long, we can ensure that
some word $u'$ appears as a word-difference between $BC$ and $AD$ at least $m$
times; 
that is, $v$ has consecutive subwords $v_0,\ldots,v_m,v'$, such that
$v = v_0v_1v_2 \cdots v_m v'$, and $hv_0v_1v_2 \cdots v_iu' =_G v_0v_1v_2
\cdots v_i$ for each $i$ with $0 \le i \le m$.
The case $i=0$ gives $u' = v_0^{-1}h^{-1}v_0$, and
it follows from this that
$g_i := v_0(v_1v_2 \cdots v_i)v_0^{-1} \in C_G(h)$ for $1 \le i \le m$.
Also, since $v$ labels a geodesic in $\Sch$, the elements 
$v_0v_1, v_0v_1v_2, \ldots v_0v_1v_2 \cdots v_m$ lie in distinct cosets of
$H$ and hence so do the $g_i$. Since we can choose $m$ arbitrarily
large, this contradicts the finiteness of $|C_G(h):C_H(h)|$.

\section*{Acknowledgements}

The first author was supported by the Swiss National Science 
Foundation grant Professorship FN PP00P2-144681/1, and would like to thank the
mathematics departments of the Universities of Newcastle and Warwick for their
support and hospitality.

\textsc{Laura Ciobanu,
Mathematical and Computer Sciences,
Colin McLaurin Building, 
Heriot-Watt University,      
Edinburgh EH14 4AS, UK}

\emph{E-mail address}{:\;\;}\texttt{l.ciobanu@hw.ac.uk}
\bigskip

\textsc{Derek Holt,
Mathematics Institute,
Zeeman Building,
University of Warwick,
Coventry CV4 7AL, UK
}

\emph{E-mail address}{:\;\;}\texttt{D.F.Holt@warwick.ac.uk}
\bigskip

\textsc{Sarah Rees,
School of Mathematics and Statistics,
University of Newcastle,
Newcastle NE1 7RU, UK
}

\emph{E-mail address}{:\;\;}\texttt{Sarah.Rees@ncl.ac.uk}

\end{document}